\theoremstyle{plain}
\newtheorem{thm}{Theorem}
\newtheorem{cor}[thm]{Corollary}
\newtheorem{prop}[thm]{Proposition}
\newtheorem{defn}[]{Definition}
\theoremstyle{definition}
\newcommand{\R}{\mathbb{R}}
\newcommand{\ep}{\varepsilon}
\newcommand{\rd}{\partial}
\newcommand{\Si}{\Sigma}
\title{On Area Comparison and Rigidity Involving the Scalar Curvature}
\author{Vlad Moraru}
\date{\today}
\begin{document}
\maketitle
\begin{abstract}\noindent
  We prove a splitting theorem for Riemannian $n$-manifolds with scalar curvature bounded below by a negative constant and  containing certain area-minimising hypersurfaces $\Si$ (Theorem \ref{highdimsplitt}). Thus we generalise \cite[Theorem 3]{Nun-areamin} by Nunes. 
  This splitting result follows from an area comparison theorem for hypersurfaces with non-positive $\sigma$-constant (Theorem \ref{highdimcomp}) that generalises \cite[Theorem 2]{MM}.
  Finally, we will address the optimality of these comparison and splitting results by explicitly constructing several examples.
  \end{abstract}
% \tableofcontents
\section{Introduction}
\noindent
A classical result by Heintze and Karcher \cite[Theorem 3.2 (d)]{HK} states that if a complete Riemannian manifold $M$ of non-negative Ricci curvature contains a closed, two-sided, minimal hypersurface $\Si$, then the exponential map of the normal bundle $\Si \times \R$ of $\Si$ in $M$ is volume non-increasing. This result was discovered also, independently, by Maeda \cite{Mae-comparison}.
\medskip\\
Easy counterexamples show that the same conclusion can not hold assuming only a lower bound on the scalar curvature; not even under the stronger assumption of $\Si$ being totally geodesic. Indeed, we can take the product $M:=\mathbb{S}^2\times (-\ep,\ep)$ of a round 2-sphere with a small interval, equipped with the warped metric $(1+t^{2k})g+dt^2,\ k\geq 1$. The normal Ricci curvature of $M$ will be non-positive. However, for small enough $\ep >0$ and for a large enough $k\in\mathbb{N}$, the positive curvature of the leaves will dominate and hence $M$ will have positive scalar curvature.
Therefore, in order to ensure that area non-increases in the case of a lower bound on the scalar curvature, additional geometric assumptions must be imposed on $\Si$.
It's not obvious what would be a ''natural`` analogue of Heintze-Karcher-Maeda theorem for manifolds with lower bounds on the scalar curvature. It turns out that one answer comes from the study of stable, minimal surfaces in 3-manifolds with scalar curvature bounded  below. 
\medskip\\
In a celebrated paper from 1979, Schoen and Yau discovered a deep connection between the topology of stable minimal surfaces and the scalar curvature $S^M$ of the ambient 3-manifold $M$ \cite{SY-incompressible}. Namely, by using the second variation of area formula, they showed that any closed, two-sided, stable, minimal surface in a 3-manifold of positive scalar curvature must have genus zero. 
Soon after, Fischer-Colbrie and Schoen studied the case $S^M \geqslant 0$ and proved in \cite{FCS-structure} that, in this case, the 
genus of $\Sigma$ must be zero or one, and if it is one, then $\Sigma$ is totally geodesic and the normal Ricci curvature of $M$ vanish all along $\Sigma$, i.e. $M$ splits infinitesimally along $\Si$. 
Furthermore, $\Si$ is flat and the scalar curvature $S^M$ of $M$ vanishes all along $\Sigma$. Since $M$ has non-negative scalar curvature it means that $S^M$ attains its infimum along this totally geodesic torus. 
In \cite{CG-areamin} Cai and Galloway showed that if, moreover, this minimal torus $\Si$ is assumed to be \emph{area-minimising} and not just stable, then this infinitesimal splitting of the ambient manifold propagates to an entire neighbourhood of $\Si$.
It turns out that the torus is by no means a special case.
\medskip\\
Indeed, a closer look at the proof of Schoen and Yau reveals that a lower bound on the scalar curvature of the ambient 3-manifold provides a bound on the area of a stable minimal surface contained in it.
More precisely, as observed in \cite{SZ}, if $S^M\geq S_0$, then the area of any closed, stable minimal surface $\Si$ with genus $\gamma\neq 1$, satisfies
\begin{equation}\label{areabd1}
  \begin{cases}
    \text{A}(\Si) \leq 4\pi &\text{ if $S_0=2$}\\
    \text{A}(\Si) \geq 4\pi(\gamma-1) &\text{ if $S_0=-2$ and $\gamma\geq 2$}.
  \end{cases}
\end{equation}
\noindent\\
The genus one case is excluded since no area bounds are possible for stable minimal tori. This is easily illustrated by stable two-dimensional tori in flat three-dimensional tori.
\medskip\\
Using an analysis similar to that used by Fischer-Colbrie and Schoen in the genus one case, it was observed in \cite{BBN-areamin} for $S_0>0$ and in \cite{Nun-areamin} for $S_0<0$ that the equality case of \eqref{areabd1} corresponds again to an infinitesimal splitting of the ambient manifold along $\Si$. More precisely we have that
\begin{enumerate}\label{starprop}
    \item[(i)] $\Si$ is totally geodesic and
    \item[(ii)] the normal Ricci curvature of $M$  vanishes all along $\Si$.
\end{enumerate}
Furthermore, the ambient scalar curvature attains its infimum along the surface $\Si$, i.e.
\begin{enumerate}
    \item[(iii)] $S^M=S_0$ at every point of $\Si$.
\end{enumerate}
\noindent
Motivated by this observation we proved in \cite{MM} the following analog of Heintze-Karcher-Maeda area comparison therorem.
\begin{thm}[Area Comparison in 3-Manifolds \cite{MM}]\label{MM}
  Let $M$ be a complete $3$-manifold with scalar curvature $S^M\geq S_0$, where $S_0\in\R$.
  Let $\Si \subset M$ be an immersed, closed, two-sided surface of genus $\gamma\geq 0$ such that the above three properties
  (i)-(iii) hold.
%   \begin{enumerate}
%     \item[(i)] $\Si$ is totally geodesic,
%     \item[(ii)] the normal Ricci curvature of $M$  vanishes all along $\Si$ and 
%     \item[(iii)] $S^M=S_0$ at every point of $\Si$.
%   \end{enumerate}
  Let $\ep>0$ and let $\{\Si_t\}$, $t\in (-\ep,\ep)$, be a constant mean curvature foliation
    \footnote{The existence of such a foliation follows from properties (i) and (ii) and from the implicit function theorem. 
    See for eg. \cite[Proposition 2]{Nun-areamin}, \cite{Cai-areamin} or the proof of Lemma 4.1 in \cite{Met-maximal}.}
  in a neighbourhood of $\Si$ and denote by $A(\Si_t)$ the area of $\Si_t$. (In particular $A(\Si_0) = A(\Si)$.) 
  Then there exists $0 < \delta < \ep$ such that
    \[
    A(\Si_t) \leq A(\Si_0), \qquad \text{for all }|t| < \delta,
    \]
   Moreover, by the Gauss equation, $\Si$ has constant Gauss curvature equal to $\frac12 S_0$  and therefore, by Gauss-Bonnet theorem, $|S_0| A(\Si) = 8\pi(\gamma-1)$, if $S_0$ is non-zero.
\end{thm}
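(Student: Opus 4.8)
The plan is to reduce the area inequality to a sign condition on the mean curvature of the leaves, obtained from a differential inequality that couples the second variation of area to the Gauss equation, and then to propagate that sign by a continuity (bootstrap) argument powered by $S^M\ge S_0$. I start with the ``moreover'' assertion, which also isolates the only fact about $\Si_0$ needed afterwards. In dimension three one has $S^M=2K^M(T\Si)+2\,\mathrm{Ric}^M(\nu,\nu)$, so (iii) and (ii) give $K^M(T\Si)=\tfrac12 S_0$ along $\Si$, and, $\Si$ being totally geodesic by (i), the Gauss equation gives $K_\Si=K^M(T\Si)=\tfrac12 S_0$. Hence $\Si$ has constant Gauss curvature $\tfrac12 S_0$, and Gauss--Bonnet yields $\tfrac12 S_0\,A(\Si)=2\pi\chi(\Si)=4\pi(1-\gamma)$, i.e.\ $|S_0|\,A(\Si)=8\pi|\gamma-1|$ when $S_0\neq0$. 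Since $\chi(\Si_t)=\chi(\Si)$ is topological, Gauss--Bonnet on $\Si_t$ and the previous line give, for every leaf, $\int_{\Si_t}K_{\Si_t}\,dA_t=2\pi\chi(\Si)=\tfrac12 S_0\,A(\Si_0)$, and this is the single input of the rigidity of $\Si_0$ into the comparison.

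Now the variational setup. Put $A(t):=A(\Si_t)$, let $\varphi_t>0$ be the lapse of the foliation, $\mathcal V(t):=\int_{\Si_t}\varphi_t\,dA_t>0$, and $H(t)$ the constant value of the mean curvature of $\Si_t$. The first variation of area gives $A'(t)=-H(t)\,\mathcal V(t)$, and the first variation of mean curvature — the tangential term dropping because the mean curvature of $\Si_t$ is constant — gives, pointwise on $\Si_t$, $H'(t)=\Delta_{\Si_t}\varphi_t+\bigl(|A_t|^2+\mathrm{Ric}^M(\nu_t,\nu_t)\bigr)\varphi_t$, where $A_t$ denotes the second fundamental form of $\Si_t$. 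At $t=0$ properties (i) and (ii) collapse this to $\Delta_{\Si_0}\varphi_0=H'(0)$; the right side being a constant forces $\varphi_0$ constant and $H'(0)=0$, so, with $H(0)=0$, one has $A'(0)=A''(0)=0$. Hence $\Si_0$ is a \emph{degenerate} critical point of $A$, the bare second-variation inequality is vacuous, and a genuine differential inequality for $H$ is required.

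To produce one, multiply the identity for $H'(t)$ by $\varphi_t^{-1}$ and integrate over $\Si_t$; since $\int_{\Si_t}\varphi_t^{-1}\Delta_{\Si_t}\varphi_t=\int_{\Si_t}\varphi_t^{-2}|\nabla\varphi_t|^2\ge0$ this gives $\int_{\Si_t}\bigl(|A_t|^2+\mathrm{Ric}^M(\nu_t,\nu_t)\bigr)\,dA_t\le H'(t)\int_{\Si_t}\varphi_t^{-1}\,dA_t$. By the Gauss equation in dimension three, $|A_t|^2+\mathrm{Ric}^M(\nu_t,\nu_t)=\tfrac12|A_t|^2+\tfrac12 H(t)^2+\tfrac12 S^M-K_{\Si_t}$, so, discarding $\tfrac12|A_t|^2\ge0$, using $S^M\ge S_0$ and the identity $\int_{\Si_t}K_{\Si_t}\,dA_t=\tfrac12 S_0 A(\Si_0)$ established above,
\[
H'(t)\int_{\Si_t}\varphi_t^{-1}\,dA_t\;\ge\;\tfrac12\,H(t)^2\,A(t)+\tfrac12 S_0\bigl(A(t)-A(0)\bigr).
\]
Call this inequality $(\ast)$. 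Testing with $\varphi_t^{-1}$ rather than $\varphi_t$ is what puts the favourable sign on $+H'(t)$, and coupling to the Gauss equation while discarding $|A_t|^2$ is what makes the estimate depend only on the scalar curvature bound.

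Finally, the continuity argument, for $S_0\le0$. Let $I$ be the connected component of $\{t\in(-\ep,\ep):A(\Si_t)\le A(\Si_0)\}$ containing $0$; it is a relatively closed subinterval. For $t\in I$ the right side of $(\ast)$ is $\ge0$ — this is precisely where $S_0\le0$ is used, since then $\tfrac12 S_0(A(t)-A(0))\ge0$ — so $H'\ge0$ on $I$; with $H(0)=0$ this forces $H\ge0$ on $I\cap[0,\ep)$ and $H\le0$ on $I\cap(-\ep,0]$, whence $A'=-H\,\mathcal V$ is $\le0$ on $I\cap[0,\ep)$ and $\ge0$ on $I\cap(-\ep,0]$; thus $A$ is monotone towards $0$ on $I$, consistent with $A\le A(0)$ there. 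If an endpoint $t_0$ of $I$ had $|t_0|<\ep$, then either $A'(t_0)\neq0$, so $A<A(0)$ just past $t_0$, or $A'(t_0)=0$, i.e.\ $H(t_0)=0$, so $H\equiv0$ between $0$ and $t_0$ by the monotonicity just proved and $H(0)=0$; in the latter case $A$ is constant between $0$ and $t_0$, and chasing equality in $(\ast)$ and in the Gauss step shows every leaf there is totally geodesic with $S^M\equiv S_0$ and vanishing normal Ricci — hence satisfies (i)--(iii) — so the argument restarts from $\Si_{t_0}$ and again yields $A\le A(t_0)=A(0)$ just beyond $t_0$. Either way a point beyond $t_0$ lies in $\{A\le A(0)\}$, contradicting that $t_0$ is an endpoint of the component $I$. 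Therefore $I=(-\ep,\ep)$ and $A(\Si_t)\le A(\Si_0)$ for all $|t|<\ep$, which is the claim (any $\delta<\ep$ works). The crux is everything producing $(\ast)$: because $\Si_0$ is a degenerate critical point one cannot localise at $\Si_0$, and must instead squeeze a pointwise sign out of the Gauss equation, the bound $S^M\ge S_0$, and the positivity of the lapse $\varphi_t$; the rest is bookkeeping. The remaining case $S_0>0$ forces $\gamma=0$ and $\Si\cong\mathbb S^2$, falls outside the reach of $(\ast)$, and is treated by the more delicate, sphere-specific argument of \cite{BBN-areamin}.
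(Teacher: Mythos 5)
Your derivation of the key inequality $(\ast)$ is correct and is exactly the three-dimensional shadow of the paper's argument for Theorem \ref{highdimcomp}: testing the evolution equation \eqref{Ev-eq} against the reciprocal of the lapse, feeding in the Gauss equation and $S^M\ge S_0$, and replacing $\int_{\Si_t}K_{\Si_t}$ by the topological constant $2\pi\chi(\Si)=\tfrac12 S_0A(\Si_0)$ is precisely what Gauss--Bonnet does here in place of the Yamabe/$\sigma$-constant machinery of \eqref{bigineq}--\eqref{keyineq}. The ``moreover'' part and the observation that $H(0)=H'(0)=0$, so that $\Si_0$ is a degenerate critical point of area, are also fine.

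The gap is in the final continuity argument, and it is fatal as written: it is circular at $t=0$. Inequality $(\ast)$ yields $H'\ge 0$ only on the set where $A(t)\le A(0)$ is \emph{already known}, and you only deduce $A(t)\le A(0)$ where you have already shown $H\ge 0$. Since $A'(0)=A''(0)=0$ (as you yourself note), there is no a priori reason the component $I$ of $\{A\le A(0)\}$ containing $0$ is anything more than $\{0\}$; in that case your endpoint dichotomy lands in the branch $A'(t_0)=0$ with $t_0=0$, and ``restart from $\Si_{t_0}$'' is literally the original unproved claim. (The warped product $(1+t^4)ds^2+dt^2$ on $\mathbb S^2\times(-\ep,\ep)$ shows the failure of the conclusion can occur at fourth order, so no local Taylor expansion at $t=0$ can supply the missing base case.) A symptom of the same problem is that you conclude $A(\Si_t)\le A(\Si_0)$ for all $|t|<\ep$, whereas the theorem only asserts this for $|t|<\delta$ with $\delta$ possibly much smaller than $\ep$. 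The correct way to close the argument --- the one used in the paper, see \eqref{keyineq2}--\eqref{Gronwall2} --- is to eliminate $A(t)-A(0)$ from $(\ast)$ by writing it as $-\int_0^t H(s)\,\mathcal V(s)\,ds$ via the first variation formula, so that one obtains a closed Gronwall-type integro-differential inequality in $H$ alone, and then to run the mean-value-theorem contradiction argument (the ``Claim'' in the proof of Theorem \ref{highdimcomp}); that argument genuinely uses the smallness $\ep<C_3^{-1/2}$, which is where $\delta$ comes from. Finally, note that the statement covers all $S_0\in\R$; the case $S_0>0$ is not ``outside the reach'' of this method --- in \cite{MM} the same Gronwall inequality handles it --- but your proposal simply does not treat it.
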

\noindent
Although the proof of Theorem \ref{MM} relies heavily on the Gauss-Bonnet theorem, the restriction to two-dimensional surfaces $\Si$ is no mere matter of technical issues, nor are the very restrictive assumptions (i)-(iii). These assumptions are optimal in the following sense. The 3-manifold $M:=\mathbb{S}^2\times(-\ep,\ep)$ equipped with the metric $(1+t^4)ds^2 + dt^2$, where $ds^2$ is round, satisfies $S^M\geq 0$ and also properties (i) and (ii). But the scalar curvature of $M$ decreases away from $\Si:=\mathbb{S}^2\times \{0\}$ and therefore (iii) is not satisfied.
We also see that the area of $\Si_t:=\mathbb{S}^2\times\{t\}$ is given by $A(\Si_t) = (1+t^4)A(\Si)$ and hence $A(\Si_t)$ increases as $|t|<\ep$ increases away from zero. Furthermore, the assumption in Theorem \ref{MM} on the dimension of $M$ is also optimal. This, however, is a more subtle issue and it will be addressed in detail in the third section of this article.
\medskip\\
Theorem \ref{MM} can be loosely restated as follows:\emph{
  If a 3-manifold with scalar curvature $S^M\geq S_0$ splits infinitesimally along a closed, two-sided surface $\Si$ and if $S^M \equiv S_0$ along $\Si$, then $\Si$ can not be strictly area-minimising inside $M$.}
% \medskip\\
\ Therefore, if one additionally assumes $\Si$ to be area-minimising then one can further show that, in this case, the infinitesimal splitting of $M$ along $\Si$ actually propagates to an entire neighbourhood of $\Si$ and hence the ambient 3-manifold $M$ is locally  isometric to a product. 
This is the content of the following splitting theorem the three cases of which were separately proved by Bray, Brendle and Neves for $S_0>0$, by Cai and Galloway for $S_0=0$ and by Nunes for $S_0<0$.
\begin{thm}[Splitting of 3-Manifolds, \cite{BBN-areamin},\cite{CG-areamin},\cite{Nun-areamin}]\label{splitting}
  Let $M$ be a complete $3$-manifold with scalar curvature $S^M\geq S_0$ where $S_0\in\R$. Assume that $M$ contains a closed, embedded, two-sided, area-minimising surface $\Si$.
  \begin{enumerate}
    \item [(a)] Suppose 
%\footnote{By scaling the metric if necessary we can assume, without the loss of generality, that $S_0 = 2,\ 0$ and $-2$ in the cases (a), (b) and (c), respectively.}
    that $S_0 = 2$ and that $A(\Si) = 4 \pi$. Then $\Si$ has genus zero and it has a neighbourhood which is isometric to the product $g_1 + dt^2$ on $S^2 \times (-\delta,\delta)$ where $g_1$ is the metric on the Euclidean two-sphere of radius 1.
    \item [(b)] Suppose that $S_0 = 0$ and that $\Si$ has genus one. Then $\Si$ has a neighbourhood which is flat and isometric to the product $g_0 + dt^2$ on $\mathbb{T}^2 \times (-\delta,\delta)$ where $g_0$ is a flat metric on the 2-torus $\mathbb{T}^2$. 
    \item [(c)] Suppose that $S_0 = -2$ and that $\Si$ has genus $\gamma \geqslant 2$ and $A(\Si)=4 \pi (\gamma - 1)$. Then $\Si$ has a neighbourhood which is isometric to the product $g_{-1} + dt^2$ on $\Si \times (-\delta,\delta)$ where $g_{-1}$ is a metric of constant Gauss curvature equal to $-1$ on $\Si$.
  \end{enumerate}
\end{thm}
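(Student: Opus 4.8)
The plan is to treat all three cases uniformly by exploiting the area comparison Theorem \ref{MM} together with the area-minimising hypothesis, following the strategy of Bray–Brendle–Neves, Cai–Galloway and Nunes. First I would verify that the equality hypotheses of each case force properties (i)–(iii): if $\Si$ is area-minimising then it is in particular stable minimal, so the area bounds \eqref{areabd1} apply; equality in these bounds, combined with the second variation / stability inequality analysis of Fischer-Colbrie–Schoen in the genus-one case (and its adaptations in \cite{BBN-areamin}, \cite{Nun-areamin}), yields that $\Si$ is totally geodesic, that the normal Ricci curvature $\mathrm{Ric}^M(\nu,\nu)$ vanishes along $\Si$, and that $S^M \equiv S_0$ on $\Si$. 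Thus $\Si$ satisfies (i)–(iii) and moreover, by Gauss–Bonnet, carries a metric of constant Gauss curvature $\tfrac12 S_0$.

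Next, using (i) and (ii) and the implicit function theorem (as in the footnote to Theorem \ref{MM}), I would produce the constant mean curvature foliation $\{\Si_t\}_{t\in(-\ep,\ep)}$ of a neighbourhood of $\Si$, with $\Si_0 = \Si$, and write the metric in Fermi-type coordinates as $g = \varphi(x,t)^2\,dt^2 + g_t$ where $g_t$ is the induced metric on $\Si_t$. Write $H(t)$ for the (constant) mean curvature of $\Si_t$ and $A(t) := A(\Si_t)$. Theorem \ref{MM} gives $A(t) \le A(0)$ for $|t|$ small, while the area-minimising property of $\Si_0$ gives $A(t) \ge A(0)$ for all $t$; hence $A(t) \equiv A(0)$ on a small interval. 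I would then differentiate: $A'(t) = -\int_{\Si_t} H(t)\,\varphi\, d\mu_t$, so $A(t)$ constant forces $H(t) \equiv 0$, i.e. every leaf $\Si_t$ is minimal. Each $\Si_t$ is then itself area-minimising in its own right (being a minimal leaf of a foliation with the volume on both sides controlled, or directly because it has the same area as the minimiser $\Si_0$), so the rigidity of \eqref{areabd1} applies to every leaf: each $\Si_t$ is totally geodesic, $\mathrm{Ric}^M(\nu,\nu) \equiv 0$ along $\Si_t$, and $S^M \equiv S_0$ on $\Si_t$.

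From here the splitting is a computation. Since every leaf is totally geodesic, the second fundamental form vanishes identically on the neighbourhood, which by the Riccati equation $\partial_t \mathrm{II} = -\mathrm{II}^2 - R(\cdot,\partial_t)\partial_t$ (together with $\mathrm{II} \equiv 0$) forces $R(\cdot,\partial_t)\partial_t = 0$, so $\partial_t$ is a parallel unit field after normalising $\varphi \equiv 1$; standard Fermi-coordinate computations then show $\partial_t g_t = 2\,\mathrm{II}_t = 0$, so $g_t \equiv g_0$ and $g = g_0 + dt^2$ on $\Si \times (-\delta,\delta)$. Finally, by the Gauss equation and $S^M \equiv S_0$, $g_0$ has constant Gauss curvature $\tfrac12 S_0$, which is $1$, $0$, $-1$ in cases (a), (b), (c) respectively; in case (a) a round $S^2$ of curvature $1$ has area $4\pi$, consistent with the hypothesis, and the genus is zero.

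The main obstacle I anticipate is the step showing that \emph{every} leaf $\Si_t$, not merely $\Si_0$, inherits the rigidity (i)–(iii): the area-minimising hypothesis is assumed only for $\Si_0$, so one must argue that constancy of $A(t)$ plus minimality of the leaves upgrades each $\Si_t$ to an area-minimiser — or, alternatively, bypass this by applying the stability-rigidity analysis directly to each minimal leaf using that $A(\Si_t) = A(\Si_0)$ already saturates the bound \eqref{areabd1}. Ensuring the foliation neighbourhood is uniform in $t$ and that $\delta$ can be chosen so that all the above holds simultaneously is the delicate analytic point; the rest is the now-standard infinitesimal-to-local splitting argument.
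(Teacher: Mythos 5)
Your proposal is correct and follows essentially the same route the paper takes: it is exactly the ``unified approach'' via the area comparison Theorem \ref{MM} that the paper advocates for Theorem \ref{splitting} and carries out verbatim in the proof of the higher-dimensional analogue, Theorem \ref{highdimsplitt} (area comparison plus area-minimising forces $A(\Si_t)\equiv A(\Si_0)$, hence every leaf is area-minimising and inherits the infinitesimal rigidity, the lapse function is harmonic and therefore constant, and the metric splits). The only cosmetic difference is that the paper derives the constancy of the lapse from the evolution equation \eqref{Ev-eq} rather than from the Riccati equation in Fermi coordinates, but these are equivalent once each leaf is known to be totally geodesic.
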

\noindent
The original proofs of these three cases are very different in nature and, with one exception only, the techniques used seem to be specialised for each case individually. 
Therefore, after analysing the original proofs, it might not be obvious that, in each case, the splitting is actually caused by the same geometric phenomenon. 
This geometric phenomenon is captured in the above-mentioned area comparison theorem \ref{MM}, and its proof provides a unified approach to all three cases of Theorem \ref{splitting}. 
Recently Ambrosio \cite{Amb-areamin} and Espinar \cite{Esp-density} generalised Theorem \ref{splitting} to area-minimising free boundary surfaces and to weighted area-minimizing surface, respectively.
\medskip\\ \\
The proof of the area bound \eqref{areabd1} relies on the Gauss-Bonnet Theorem which provides the essential link between the topology of a closed surface and the total Gaussian (i.e. scalar) curvature of the surface.\
If one wants to generalise these area bounds to higher dimensions, then one needs to look at other topological invariants which generalise the Euler characteristic.\
It turns out that a good generalisation is given by the \emph{$\sigma$-constant}, introduced independently by Schoen \cite{Sch-total} and Kobayashi \cite{Kob-sigma} in relation with the Yamabe problem. (See Definition \ref{sigdefn} below.) In two dimensions, the $\sigma$-constant is just a multiple of the Euler characteristic. In higher dimensions it was shown by Schoen \cite{Sch-total} that a compact manifold has positive $\sigma$-constant if and only if it admits a metric of positive scalar curvature.
\medskip\\
By exploiting these connections between the $\sigma$-constant and the Euler characteristic, Cai and Galloway generalised the second inequality in \eqref{areabd1} for stable minimal hypersurfaces $\Si$ of negative $\sigma$-constant. 
In \cite{CG-topology} they showed that if $\Si$ is a compact, stable, 2-sided, minimal hypersurface with $\sigma(\Si)< 0$ in a complete $n$-manifold $M$ of scalar curvature $S^M$ bounded below by a constant $S_0<0$, then the area of $\Si$ satisfies
\begin{equation}\label{areabd2}
  A(\Si)^\frac{2}{n-1}\geq \frac{\sigma(\Si)}{S_0},
\end{equation}
where the right-hand side is positive since, by assumption, both $S_0$ and $\sigma(\Si)$ are negative.
Notice that when $n=3$ and $S_0=-2$ in \eqref{areabd2} we recover the higher genus case of inequality \eqref{areabd1} since, in this case, $\sigma(\Si)=4\pi\chi(\Si)=8\pi(1-\gamma)$. 
\medskip\\
In \cite{M-thesis} we have investigated the equality case in \eqref{areabd2} and proved that it corresponds to an infinitesimal splitting of the ambient manifold (see Proposition \ref{infinitesimalsplitt} below). We will show that if, moreover, $\Si$ is assumed to be \emph{area-minimising} then, like in the 3-dimensional case described above, this infinitesimal splitting of the ambient manifold $M$ actually propagates to an entire neighbourhood of $\Si$. More precisely we prove the following splitting theorem which, in the light of the above discussion, it can be seen as a generalisation of Theorem \ref{splitting} (c) to dimensions greater than or equal to four.

\begin{thm}[Splitting of $n$-Manifolds]\label{highdimsplitt}
  Let $M$ be a complete, Riemannian, $n$-dimensional manifold ($n\geq 4$) with scalar curvature $S^M \geq S_0$, where $S_0$ is a negative constant.
  Assume that $M$ contains a closed, two-sided, area-minimising hypersurface $\Si$ with $\sigma(\Si)<0$ and area satisfying $S_0 A(\Si)^{2/n-1}=\sigma(\Si)$. 
%   Then the area of $\Si$ satisfies \eqref{areabd2} and if equality is attained, then $M$ splits isometrically as a product in a neighbourhood of $\Si$.
  Then $\Si$ has a neighbourhood which is isometric to the product $g^{\Si} + dt^2$ on $\Si\times (-\delta, \delta)$, where $g^{\Si}$ is Einstein.
\end{thm}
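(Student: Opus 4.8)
The plan is to mimic the strategy that underlies the three-dimensional splitting theorem, replacing the role of Gauss--Bonnet by the $\sigma$-constant machinery. The starting point is the infinitesimal splitting: since equality holds in \eqref{areabd2}, Proposition \ref{infinitesimalsplitt} tells us that $\Si$ is totally geodesic, the normal Ricci curvature $\mathrm{Ric}^M(\nu,\nu)$ vanishes along $\Si$, and $S^M = S_0$ on $\Si$; moreover one expects the induced metric $g^\Si$ to be Einstein (this is forced by the equality in the Yamabe-type inequality used to derive \eqref{areabd2}), achieving the Yamabe invariant. From properties (i)--(ii) and the implicit function theorem (as in the footnote to Theorem \ref{MM}, following Nunes's \cite[Proposition 2]{Nun-areamin}) we obtain a constant-mean-curvature foliation $\{\Si_t\}_{t\in(-\ep,\ep)}$ in a neighbourhood of $\Si$, with $\Si_0 = \Si$. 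Write the metric in Fermi-type coordinates as $g = \varphi(x,t)^2\,dt^2 + g_t$, where $g_t$ is the induced metric on $\Si_t$, and let $H(t)$ denote the (constant in $x$) mean curvature of $\Si_t$.

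Next I would set up the area functional $A(t) := A(\Si_t)$ and study its behaviour near $t=0$. The first variation gives $A'(t) = -\int_{\Si_t} H(t)\,\varphi\,d\mu_t$ (with the appropriate sign convention), so $A'(0)=0$ since $\Si$ is minimal. The key computation is the second variation: differentiating again and using the stability/area-minimising property together with the CMC condition, one relates $A''(0)$ to $\int_\Si \big(|\mathrm{II}|^2 + \mathrm{Ric}^M(\nu,\nu)\big)$ and, via the Gauss equation $S^M = S^\Si - 2\,\mathrm{Ric}^M(\nu,\nu) - |\mathrm{II}|^2 + H^2$, to the scalar curvatures. Here is where the $\sigma$-constant enters: because $g^\Si$ realises the Yamabe invariant of $\Si$ and $\sigma(\Si)<0$, the Yamabe/eigenvalue inequality gives a sharp lower bound $\int_\Si S^\Si \cdot (\text{test function})$ in terms of $\sigma(\Si)$ and $A(\Si)$, and the hypothesis $S_0 A(\Si)^{2/(n-1)} = \sigma(\Si)$ makes this bound tight. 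Combining the area-minimising inequality $A(t)\ge A(0)$ (so $A''(0)\ge 0$) with the scalar curvature bound $S^M\ge S_0$ forces all the intermediate inequalities to be equalities: in particular $\mathrm{II}\equiv 0$, $\mathrm{Ric}^M(\nu,\nu)\equiv 0$, and $S^M\equiv S_0$, not merely on $\Si$ but on the whole foliated neighbourhood, and the conformal factor in the Yamabe problem is forced to be constant so that each $g_t$ is Einstein with the same constant. This is the analytic heart of the argument and is the step I expect to be the main obstacle: making the variational inequality genuinely sharp in all dimensions requires the right choice of test function (the first Yamabe eigenfunction on $\Si$, transported along the foliation) and careful bookkeeping of the error terms, since unlike in dimension $3$ there is no Gauss--Bonnet identity to collapse everything.

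Finally, having shown that every leaf $\Si_t$ is totally geodesic with vanishing normal Ricci curvature and that $S^M\equiv S_0$ throughout, I would upgrade this infinitesimal rigidity to an actual isometric splitting. Totally geodesic leaves with $H(t)\equiv 0$ mean $A(t)$ is constant and the foliation is by parallel hypersurfaces; the vanishing of the second fundamental form of each leaf implies $\partial_t g_t = 0$, so $g_t = g^\Si$ is independent of $t$, and one can normalise the coordinate $t$ (replacing it by arclength along the normal geodesics) so that $\varphi\equiv 1$. This yields $g = g^\Si + dt^2$ on $\Si\times(-\delta,\delta)$ with $g^\Si$ Einstein, which is the claim. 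A subtlety to address carefully is that the foliation produced by the implicit function theorem consists of CMC, not necessarily minimal, hypersurfaces a priori; one must argue that the mean curvature function $H(t)$ vanishes identically --- this follows from combining $A'(t) = -H(t)\int\varphi\,d\mu_t$ with the fact, extracted from the rigidity above, that $A(t)\le A(0)$ for all small $t$ while also $A(t)\ge A(0)$ by minimality, hence $A$ is constant and $H\equiv 0$.
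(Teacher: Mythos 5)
Your overall skeleton (infinitesimal splitting at $t=0$, CMC foliation, show $H\equiv 0$, then integrate up to a product) matches the paper's, but there is a genuine gap at exactly the step you flag as ``the analytic heart'': you never actually establish the inequality $A(\Si_t)\leq A(\Si_0)$ for all small $t$, and the mechanism you propose for it cannot work. A second variation computation at $t=0$ only yields pointwise information \emph{on $\Si_0$}; it reproduces Proposition \ref{infinitesimalsplitt} (total geodesy, $Ric^M(\nu,\nu)=0$, $S^M=S_0$ along $\Si_0$, $g^\Si$ Einstein) and nothing more. In particular $A''(0)\geq 0$ is vacuous here --- in the equality case one in fact has $A''(0)=0$ automatically, and this is perfectly consistent with $A(\Si_t)>A(\Si_0)$ for $t\neq 0$ at fourth order; the warped-product examples in the paper (e.g. $(1+t^4)ds^2+dt^2$ on $\mathbb{S}^2\times(-\ep,\ep)$, and Proposition \ref{cexp}) exhibit precisely this behaviour while satisfying all the infinitesimal conditions at $t=0$. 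So the assertion that the equalities $B\equiv 0$, $Ric^M(\nu,\nu)\equiv 0$, $S^M\equiv S_0$ propagate ``to the whole foliated neighbourhood'' cannot be extracted from data at $t=0$; and your closing sentence, which quietly invokes ``$A(t)\leq A(0)$ for all small $t$ \ldots extracted from the rigidity above,'' is circular, since that inequality is the very thing the second-variation argument fails to deliver.

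What is actually needed is the area comparison theorem \ref{highdimcomp}, whose proof is a genuinely \emph{non-infinitesimal} argument: for every $t$ one divides the evolution equation \eqref{Ev-eq} by the lapse, uses the Gauss equation and $S^M\geq S_0$, multiplies by the square of the Yamabe minimiser $u_t$ of the leaf $(\Si,g_t)$, integrates by parts, and applies H\"older to obtain $2H'(t)\phi(t)\leq \sigma(\Si)-S_0A(\Si_t)^{2/(n-1)}$ for all $t\in(-\ep,\ep)$. The hypothesis $S_0A(\Si_0)^{2/(n-1)}=\sigma(\Si)$ together with the first variation formula converts this into a Gronwall-type integral inequality for $H(t)$ (inequality \eqref{Gronwall2}), whose analysis gives $H(t)\leq 0$ and hence $A(\Si_t)\leq A(\Si_0)$. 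Only then does area-minimality force $A(\Si_t)=A(\Si_0)$, so that \emph{each leaf} $\Si_t$ is itself area-minimising and attains equality in \eqref{areabd2}; applying Proposition \ref{infinitesimalsplitt} leaf by leaf is what propagates the rigidity, after which \eqref{Ev-eq} makes each lapse harmonic, hence constant, and the splitting follows as you describe. Your final integration step is essentially correct once this is in place, but as written the proof has a hole precisely where the dimension-independent replacement for Gauss--Bonnet has to do its work.
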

\noindent
Theorem \ref{highdimsplitt} actually follows from the following area comparison theorem which generalises Theorem \ref{MM} to higher dimensional manifolds with scalar curvature bounded below by a non-positive constant.

\begin{thm}[Area Comparison in $n$-Manifolds]\label{highdimcomp}
  Let $M$ be a complete, Riemannian $n$-manifold ($n\geq 3$) with scalar curvature $S^M$ bounded below by a constant $S_0\leq 0$. 
  Let $\Si\in M$ be a closed, two-sided hypersurface with $\sigma(\Si)\leq 0$ such that
  \begin{enumerate}
    \item[(i)] 	$\Si$ is totally geodesic,
    \item[(ii)] 	the normal Ricci curvature of $M$  vanishes all along $\Si$,
    \item[(iii)] $S^M=S_0$ at every point of $\Si$ and
    \item[(iv)] the induced metric on $\Si$ attains the $\sigma$-constant, $\sigma(\Si)$.
  \end{enumerate}
  Let $\{\Si_t\}$, $t\in (-\ep,\ep)$, be a constant mean curvature foliation in a neighbourhood of $\Si$ and denote by $A(\Si_t)$ the area of $\Si_t$. Then there exists $0 < \delta < \ep$ such that
    \[
    A(\Si_t) \leq A(\Si_0), \qquad \text{for all }|t| < \delta,
    \]
\end{thm}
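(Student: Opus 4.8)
The plan is to follow the strategy of Theorem \ref{MM} but to replace every use of the Gauss--Bonnet theorem by the defining (in)equality of the $\sigma$-constant. Let $\{\Si_t\}$ be the constant mean curvature foliation, write the normal variation field as $f\nu$ with $f=f(\cdot,t)>0$, and set $A(t):=A(\Si_t)$. The first step is to record the first variation $A'(t)=\int_{\Si_t} H_t f\,d\mu_t$, where $H_t$ is the (constant in space, by construction) mean curvature of $\Si_t$; since $\Si_0=\Si$ is totally geodesic, $H_0=0$ and $A'(0)=0$. Hence it suffices to show $A''(0)\leq 0$, and then to argue that the sign persists for small $|t|$ --- or, more robustly, to show directly that $t\mapsto A(t)$ has a local maximum at $0$ by a second-order argument combined with the area-minimising/stability hypothesis implicit in the normalisation $S_0A(\Si)^{2/(n-1)}=\sigma(\Si)$.

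The heart of the argument is the computation of $A''(0)$. Differentiating the first variation and using that $\Si$ is totally geodesic together with $\mathrm{Ric}^M(\nu,\nu)=0$ along $\Si$ (property (ii)), the usual reductions give an expression of the shape
\[
A''(0) = \int_{\Si}\Big( |\nabla f|^2 - \big(|A_{\Si_t}|^2 + \mathrm{Ric}^M(\nu,\nu)\big)f^2 \Big)\,d\mu + H'(0)\int_\Si f\,d\mu,
\]
and one exploits that $f$ may be taken constant on $\Si$ at $t=0$ (the foliation can be built so that the lapse is spatially constant to leading order), which kills the $|\nabla f|^2$ term. The key point is then to bound $H'(0)$ from above. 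Here one uses the traced Gauss equation $S^M = S^\Si - |A_{\Si_t}|^2 - 2\,\mathrm{Ric}^M(\nu,\nu) + H^2$ relating the ambient scalar curvature, the intrinsic scalar curvature $S^\Si$ of the leaf, and the second fundamental form, combined with the evolution equation for $H$ under the flow. Integrating the Gauss equation over $\Si_t$, applying $S^M\geq S_0$, and invoking property (iv) --- that $g^\Si$ realises $\sigma(\Si)$, so that $\int_{\Si_t}S^{\Si_t}\,d\mu_t \le \sigma(\Si)\,A(\Si_t)^{(n-3)/(n-1)}$ up to the Yamabe normalisation, with equality at $t=0$ --- yields a differential inequality for $A(t)$ of Hadamard type whose solution has a maximum at $t=0$; concretely one derives $\big(A^{2/(n-1)}\big)''(0)\le 0$, and since $S_0<0$ (or $\le 0$) this forces $A$ itself to be locally maximised at $0$.

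More precisely, I expect the clean route is to define $\varphi(t):=A(t)^{2/(n-1)}$ (or the analogous power adapted to the Yamabe exponent), show using the above that $\varphi$ satisfies $\varphi''(t)\le c\,\varphi(t)$-type control with the right boundary data $\varphi'(0)=0$, and then compare with the explicit model (the product $g^\Si+dt^2$ against the warped profiles), exactly as in \cite{Nun-areamin} and \cite{MM}. The normalisation hypothesis $S_0 A(\Si)^{2/(n-1)}=\sigma(\Si)$ is what makes the differential inequality tight at $t=0$; without it one only gets an inequality in one direction away from equality.

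The main obstacle, and the place where dimension $n\ge 3$ genuinely differs from the surface case, is that in dimensions $\ge 4$ there is no pointwise identity like ``Gauss curvature $=\tfrac12 S^\Si$'', so one cannot conclude that each leaf has constant scalar curvature; one only controls the \emph{total} scalar curvature via the $\sigma$-constant, and one must feed this into the evolution of $A(t)$ through an integral (Yamabe-type) inequality rather than pointwise. Making the Yamabe inequality interact correctly with the second variation --- in particular checking that the error terms coming from $S^{\Si_t}$ not being constant and from $f$ not being exactly constant on $\Si_t$ for $t\ne 0$ are of the right (higher) order in $t$, so that the sign of $A''(0)$ still governs the behaviour on $(-\delta,\delta)$ --- is the technical crux. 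Once that is in place, the conclusion $A(\Si_t)\le A(\Si_0)$ for $|t|<\delta$ follows, and Theorem \ref{highdimsplitt} is then obtained in the standard way by combining this comparison with the area-minimising property to force $A(\Si_t)\equiv A(\Si_0)$, hence equality throughout, hence the local product structure with $g^\Si$ Einstein (the rigidity in the Yamabe inequality forcing the Einstein condition on the leaf).
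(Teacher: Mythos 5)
Your overall instinct---replace Gauss--Bonnet by the $\sigma$-constant and convert the evolution of the leaves into a differential inequality for an integral quantity---is the right one, but two of your central steps do not work as stated.

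First, the inequality you want to feed into the evolution of $A(t)$, namely $\int_{\Si_t}S^{\Si_t}\,d\mu_t \le \sigma(\Si)\,A(\Si_t)^{(n-3)/(n-1)}$, points the wrong way. Testing the Yamabe functional \eqref{Y1} with $f\equiv 1$ gives $\int_{\Si_t}S^{\Si_t}\,d\mu_t \ge Q_{g_t}(\Si)\,A(\Si_t)^{(n-3)/(n-1)}$, a \emph{lower} bound, and this cannot be chained with $Q_{g_t}(\Si)\le\sigma(\Si)$ to produce an upper bound on the total scalar curvature. To get a usable upper bound one must test against the Yamabe \emph{minimiser} $u_t$ on each leaf, so that the quotient equals $Q_{g_t}(\Si)$ exactly and is then bounded by $\sigma(\Si)$. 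Concretely: divide the evolution equation \eqref{Ev-eq} for $H(t)$ by the lapse $\rho_t$, multiply by $u_t^2$, integrate over the leaf, handle the $\Delta_t\rho_t$ term by integration by parts plus Cauchy--Schwarz and the arithmetic-geometric mean inequality (which produces $2\int\|\nabla_t u_t\|^2$), and use the strict inequality $2<\frac{4(n-2)}{n-3}$ together with H\"older to assemble exactly the Yamabe quotient of $u_t$. None of this appears in your sketch; it is precisely where hypothesis (iv) and the higher-dimensional structure enter, and it replaces your claim that ``$f$ may be taken constant,'' which is only true at $t=0$.

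Second, your primary route---$A'(0)=0$, $A''(0)\le 0$, ``then the sign persists''---cannot succeed, because in the situation of the theorem $A''(0)=0$: the example $(1+t^4)g+dt^2$ satisfies (i)--(iii) yet has $A(\Si_t)$ strictly increasing, so no finite-order Taylor expansion at $t=0$ decides the sign. What is actually needed is an inequality valid for \emph{all} $t$ in the interval, of Gronwall type, $H'(t)\phi(t)\le -\frac{S_0}{n-1}\int_0^t H(s)\xi(s)\,ds$ with $\phi$ bounded below and $\xi$ bounded above by positive constants; a mean-value/contradiction argument then shows $H(t)\le 0$ on a possibly smaller interval, and the conclusion follows from the first variation of area. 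Your closing paragraph gestures at such a comparison, but the inequality you propose ($\varphi''\le c\varphi$ checked only at $t=0$) is not set up, and without the Yamabe-minimiser step above you cannot derive the global-in-$t$ version.
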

\noindent
We make a few comments about our assumptions.
Notice that when $S_0=0$ it follows from assumptions (i)-(iii) and by the Gauss equation that $S^\Si\equiv 0$. 
Since $\sigma(\Si)\leq 0$ we have that actually $\sigma(\Si)=0$ and, hence, the induced metric attains the $\sigma$-constant. Therefore, in this case, assumption (iv) becomes superfluous. 
This case of Theorem \ref{highdimcomp} is already contained in \cite{Cai-areamin}.
In the light of the previous discussion, when $S_0<0$ the assumption that $\sigma(\Si)<0$ corresponds to the higher genus case of Theorem \ref{MM}. 
In this case assumption (iv) can no longer be removed, as we will indicate by an example in the third section.
Finally notice that when $n=3$, $\Si$ is two-dimensional and hence the $\sigma$-constant is just twice the Euler characteristic of $\Si$. Therefore, in this case, assumption (iv) becomes vacuous and Theorem \ref{highdimcomp} reduces to the negative scalar curvature case of Theorem \ref{MM}.
\medskip\\
Fairly little is know about the precise value of the $\sigma$-constant.
For a two dimensional manifold $M$ of curvature -1, 0 or 1, the values of the $\sigma$-constant (i.e. twice the Euler characteristic) are given by the Gauss-Bonnet theorem: $8\pi,\ 0,\ -8\pi,\ -16\pi,\ ...$ and hence are completely determined by the genus of $M$.
In higher dimensions the $\sigma$-constant is too weak an invariant to capture the entire topological richness the manifold $M$ might have. This is quite clearly illustrated by the early result of Schoen \cite{Sch-total} who showed that the $\sigma$-constant seems to be insensitive to one-dimensional ''fibers`` in $M$, namely that
    $\sigma(\mathbb{S}^{n-1}\times\mathbb{S}^1)=\sigma(\mathbb{S}^n)$, $n\geq 3,$
where the $\sigma$-constant on the $\mathbb{S}^n$ is achieved by the round metric and therefore $\sigma(\mathbb{S}^n)=n(n-1)\text{Vol}(\mathbb{S}^n(1))^{2/n}$. 
A similar result was obtained by Bray and Neves in \cite{BN-sigma} where they calculated the $\sigma$-constant of the real projective $3$-dimensional space and showed that 
  $\sigma(\R\mathbb{P}^3) = \sigma(\R\mathbb{P}^2\times\mathbb{S}^1)
  =\sigma(\R\mathbb{P}^3 \# (\R\mathbb{P}^2\times\mathbb{S}^1))
  =4^{-1/3}\sigma(\mathbb{S}^3).$
This result was further generalised by Akutagawa and Neves who showed in \cite{AN-Yamabeinv} that
    $\sigma\big(\#_k (\R\mathbb{P}^3) \#_l(\R\mathbb{P}^2\times\mathbb{S}^1)\#_m(\mathbb{S}^2\times\mathbb{S}^1)\#_n (\mathbb{S}^2\tilde{\times}\mathbb{S}^1)\big) = \sigma(\R\mathbb{P}^3),$ if $k+l\geq 1$ and where $\mathbb{S}^2\tilde{\times}\mathbb{S}^1$ denotes the non-orientable $\mathbb{S}^2$-bundle over $\mathbb{S}^1$.
See also \cite{LeB-Yamabe}, \cite{LeB-noEinstein}, \cite{Pet-Yamabesimpcon} and \cite{Pet-computations} for some interesting results on the $\sigma$-constant for 4-manifolds.
\medskip\\
For 3-manifolds Anderson showed that Perelman's work on the Geometrisation Conjecture implies that the
$\sigma$-constant of a closed 3-manifold with negative $\sigma$-constant is given by the volume of its hyperbolic part \cite{And-canonical}. In particular, if $\Si$ is a closed hyperbolic manifold then $\sigma(\Si) = -6\text{Vol}(\Si)^{2/3}$. See also \cite{And-geometrization}.
We therefore have the following special cases of Theorem \ref{highdimcomp}.
\begin{cor}\label{hyperboliccomp}
Let $M$ be a complete 4-manifolds with scalar curvature $S^M \geq -6$. 
Let $\Si$ be a closed, two-sided hypersurface with $\sigma(\Si) <0$. Assume that
\begin{enumerate}
    \item[(i)] 	$\Si$ is totally geodesic,
    \item[(ii)] 	the normal Ricci curvature of $M$  vanishes all along $\Si$,
    \item[(iii)] $S^M=-6$ at every point of $\Si$ and
    \item[(iv)] the induced metric on $\Si$ is hyperbolic.
  \end{enumerate}
 Then there exists $0 < \delta < \ep$ such that
\[
    A(\Si_t) \leq A(\Si_0), \qquad \text{for all }|t| < \delta,
    \]
where $\Si_t$ is a constant mean curvature foliation in a neighbourhood of $\Si$.
\end{cor}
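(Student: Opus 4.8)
The plan is to derive Corollary~\ref{hyperboliccomp} as the special case $n=4$, $S_0=-6$ of Theorem~\ref{highdimcomp}, so that the task reduces to matching hypotheses. Conditions (i), (ii) and (iii) of the Corollary are verbatim those of the Theorem (with $S_0=-6\le 0$), and the assumption $\sigma(\Si)<0$ gives in particular $\sigma(\Si)\le 0$; hence the only point I actually need to check is that condition (iv) of the Corollary --- that the induced metric on the closed $3$-manifold $\Si$ is hyperbolic --- implies condition (iv) of Theorem~\ref{highdimcomp}, i.e. that this metric \emph{attains} the $\sigma$-constant $\sigma(\Si)$.

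For that step I would argue as follows. Write $g_{-1}$ for the hyperbolic metric on $\Si$; it has constant scalar curvature $S^{\Si}_{g_{-1}}\equiv -6$. Since this constant is negative, the conformal class $[g_{-1}]$ has negative Yamabe constant, and $g_{-1}$ is, up to scaling, the unique constant-scalar-curvature representative of its conformal class, hence the Yamabe minimiser of $[g_{-1}]$; the corresponding value of the normalised Einstein--Hilbert functional $g\mapsto \mathrm{Vol}(\Si,g)^{-1/3}\int_\Si S^{\Si}_g\,dV_g$ is $-6\,\mathrm{Vol}(\Si,g_{-1})^{2/3}$. It then remains to know that $[g_{-1}]$ also realises the supremum over all conformal classes, i.e. that $\sigma(\Si)=-6\,\mathrm{Vol}(\Si,g_{-1})^{2/3}$. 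This is exactly Anderson's formula recalled in the introduction: for a closed $3$-manifold with negative $\sigma$-constant one has $\sigma(\Si)=-6\,(\mathrm{Vol}_{\mathrm{hyp}}(\Si))^{2/3}$, and when $\Si$ is itself hyperbolic the hyperbolic part is all of $\Si$. So $g_{-1}$ attains $\sigma(\Si)$ and condition (iv) of Theorem~\ref{highdimcomp} holds; note this also reproves $\sigma(\Si)<0$, since a closed hyperbolic manifold carries no metric of positive scalar curvature.

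Having matched all four hypotheses, I would then simply invoke Theorem~\ref{highdimcomp} with $n=4$ and $S_0=-6$ to obtain $A(\Si_t)\le A(\Si_0)$ for all $|t|<\delta$, which is the assertion of the Corollary. I do not expect a genuine obstacle here: the only non-formal input is the value of $\sigma(\Si)$ for a hyperbolic $3$-manifold, which is imported from Anderson's work deduced from Perelman's geometrisation results; everything else is bookkeeping about conformal classes and hypothesis-matching.
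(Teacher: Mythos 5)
Your proposal is correct and matches the paper's own (very brief) proof: the paper likewise reduces the Corollary to Theorem \ref{highdimcomp} by noting that the hyperbolic metric on $\Si$ attains $\sigma(\Si)$, citing Anderson's result derived from geometrisation. Your expanded justification of that step is accurate and simply fills in detail the paper leaves implicit.
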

\noindent
We also have the following corollary of Theorem \ref{highdimsplitt}.
\begin{cor}\label{hyperbolicsplitting}
  Let $M$ be a complete 4-manifolds with scalar curvature $S^M \geq -6$. Assume that $M$ contains a closed,
  two-sided, area-minimising hypersurface $\Si$ with $\sigma(\Si)<0$ and which is hyperbolic with respect to the induced metric.
  Then $M$ splits locally as a product in a neighbourhood of $\Si$.
\end{cor}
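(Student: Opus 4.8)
The plan is to obtain Corollary~\ref{hyperbolicsplitting} as a direct application of Theorem~\ref{highdimsplitt} with $n=4$ and $S_0=-6$, the only task being to check that all the hypotheses of that theorem are in force. Completeness of $M$, the bound $S^M\geq -6$, and the assumption that $\Si$ is a closed, two-sided, area-minimising hypersurface with $\sigma(\Si)<0$ are given verbatim; the single hypothesis that is not literally part of the statement of the corollary is the area normalisation $S_0\,A(\Si)^{2/(n-1)}=\sigma(\Si)$, i.e.\ $-6\,A(\Si)^{2/3}=\sigma(\Si)$. The heart of the argument is therefore to verify this identity, and for this I would invoke the value of the $\sigma$-constant of a closed hyperbolic $3$-manifold recalled in the introduction: by Anderson's use of Perelman's resolution of the Geometrisation Conjecture, every closed hyperbolic $3$-manifold $\Si$ satisfies $\sigma(\Si)=-6\,\text{Vol}(\Si)^{2/3}$. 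Since the induced metric on $\Si$ is by hypothesis the hyperbolic one, its Riemannian volume $\text{Vol}(\Si)$ coincides with the area $A(\Si)$ appearing in Theorem~\ref{highdimsplitt}, so $S_0\,A(\Si)^{2/(n-1)}=-6\,A(\Si)^{2/3}=\sigma(\Si)$, exactly as required; in particular $\sigma(\Si)=-6\,A(\Si)^{2/3}<0$. (Equivalently, one may note that an area-minimising hypersurface is stable minimal, so the inequality~\eqref{areabd2} applies and, combined with Anderson's formula, reads $A(\Si)^{2/3}\geq A(\Si)^{2/3}$; hence it is automatically an equality, which is the normalisation we need.)

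With the area identity established, all the hypotheses of Theorem~\ref{highdimsplitt} hold, so I would simply apply it: it produces some $\delta>0$ together with an isometry of a neighbourhood of $\Si$ in $M$ onto the product $\big(\Si\times(-\delta,\delta),\,g^{\Si}+dt^2\big)$, where $g^{\Si}$ is Einstein. This is precisely the assertion that $M$ splits locally as a product in a neighbourhood of $\Si$. For completeness I would add the remark that, because $\Si$ is $3$-dimensional, the Einstein metric $g^{\Si}$ furnished by Theorem~\ref{highdimsplitt} automatically has constant sectional curvature; as it coincides with the induced metric on the slice $\Si\times\{0\}$, which is hyperbolic by assumption, $g^{\Si}$ is in fact the original hyperbolic metric, so the product neighbourhood is $\big(\Si\times(-\delta,\delta),\,g_{\text{hyp}}+dt^2\big)$ with $g_{\text{hyp}}$ of constant curvature $-1$.

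There is essentially no obstacle internal to this deduction; all the genuine geometric work — the area comparison and the propagation of the infinitesimal splitting — has already been carried out in Theorems~\ref{highdimcomp} and~\ref{highdimsplitt}. The only point requiring any care is the correct identification of the $\sigma$-constant of a hyperbolic $3$-manifold, which is where the deep external input (Perelman's theorem, via Anderson) enters, together with the bookkeeping observation that the ``area'' quantity in Theorem~\ref{highdimsplitt} is literally the hyperbolic volume of $\Si$.
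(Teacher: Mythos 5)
Your proposal is correct and follows essentially the same route as the paper: the paper also deduces the corollary from Theorem~\ref{highdimsplitt} by noting that, by Anderson's result (via Perelman), the hyperbolic metric attains $\sigma(\Si)=-6\,\mathrm{Vol}(\Si)^{2/3}$, which is exactly the required normalisation $S_0A(\Si)^{2/3}=\sigma(\Si)$ since $A(\Si)=\mathrm{Vol}(\Si)$ for a hypersurface of a $4$-manifold. Your closing observation that the Einstein metric $g^{\Si}$ is then the hyperbolic metric itself is a correct (and welcome) elaboration beyond the paper's one-line justification.
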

\noindent
By Schoen-Yau \cite{SY-structure} and Gromov-Lawson \cite{GL-spinscal} the $n$-dimensional torus $\mathbb{T}^n$ admits no metrics of positive scalar curvature and therefore $\sigma(\mathbb{T}^n) \leq 0$. Furthermore, any scalar flat metric on $\mathbb{T}^n$ is flat. Therefore we actually have $\sigma(\mathbb{T}^n)=0, \ n\geq 2$.
From this follows that, in a sense, the case $\sigma(\Si)=0$ resembles the genus one case for surfaces in three manifolds of non-negative scalar curvature. Furthermore $(n-1)$-dimensional flat tori in $n$-dimensional flat tori show that no area bounds are possible for stable minimal hypersurfaces with $\sigma(\Si)=0$.
Nevertheless, we have the following splitting result that generalises Theorem \ref{splitting} (b). The infinitesimal splitting was proven by Schoen and Yau and the local splitting by Cai.
\begin{thm}[\cite{SY-structure}, \cite{Cai-areamin}]
    Let $M$ be a complete Riemannian $n$-manifold with scalar curvature $S^M \geq 0$. Assume that $M$ contains a closed, two-sided, stable, minimal hypersurface $\Si$ with $\sigma(\Si)\leq 0$. Then $S^M(p) = 0$ for all $p\in\Si$ and $M$ splits infinitesimally along $\Si$ (i.e. $\Si$ has the properties (i) and (ii) from above). 
    If, moreover, $\Si$ is assumed to be area-minimising then $M$ splits isometrically as a product in a neighbourhood of $\Si$.
\end{thm}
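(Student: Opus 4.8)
The plan is to treat the two assertions in turn. For the first, which needs only stability of $\Si$, I would run the Schoen--Yau stable hypersurface argument with the conformal Laplacian of $\Si$ playing the role that Gauss--Bonnet plays in dimension three. Since $\Si$ is two-sided, stable and minimal, the second variation of area gives $\int_\Si|\nabla\varphi|^2\ge\int_\Si(|A|^2+\mathrm{Ric}^M(\nu,\nu))\varphi^2$ for all $\varphi\in C^\infty(\Si)$, where $\nu$ is the unit normal and $A$ the second fundamental form; tracing the Gauss equation with $H\equiv0$ rewrites $|A|^2+\mathrm{Ric}^M(\nu,\nu)=\tfrac12(S^M-S^\Si+|A|^2)$, so that, using $S^M\ge0$,
\[
\int_\Si|\nabla\varphi|^2+\tfrac12\int_\Si S^\Si\varphi^2\ \ge\ \tfrac12\int_\Si(S^M+|A|^2)\varphi^2\ \ge\ 0 .
\]
Because the leading coefficient $\tfrac{4(n-2)}{n-3}$ of the conformal Laplacian $L_{g^\Si}=-\tfrac{4(n-2)}{n-3}\Delta+S^\Si$ of the $(n-1)$-manifold $\Si$ exceeds $2$ for $n\ge4$, this inequality forces the quadratic form of $L_{g^\Si}$ to be nonnegative, so the Yamabe constant of $[g^\Si]$ is $\ge0$; but $\sigma(\Si)\le0$ forces it to be $\le0$, hence $\lambda_1(L_{g^\Si})=0$ and $[g^\Si]$ realises $\sigma(\Si)=0$. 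Substituting the positive first eigenfunction $u$ of $L_{g^\Si}$, for which $\int S^\Si u^2=-\tfrac{4(n-2)}{n-3}\int|\nabla u|^2$, into the displayed inequality leaves $\tfrac{1-n}{n-3}\int|\nabla u|^2$ on the left and a nonnegative quantity on the right; since the coefficient is negative for $n\ge4$, this forces $u$ to be constant, $S^M\equiv0$ and $A\equiv0$ on $\Si$ (property (i)), hence $S^\Si\equiv0$, and then the Gauss equation gives $\mathrm{Ric}^M(\nu,\nu)\equiv0$ along $\Si$ (property (ii)). For $n=3$, where $\sigma(\Si)=4\pi\chi(\Si)$, one argues in the same spirit with $-\Delta_\Si+K_\Si$ and the Gauss--Bonnet theorem, recovering the theorem of Fischer-Colbrie and Schoen.

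For the second assertion, suppose $\Si$ is area-minimising. By the first step $\Si$ satisfies hypotheses (i)--(iii) of Theorem~\ref{highdimcomp} with $S_0=0$, and since $g^\Si$ is scalar-flat with $\sigma(\Si)=0$ it satisfies (iv) as well; moreover (i)--(ii) and the implicit function theorem produce a constant mean curvature foliation $\{\Si_t\}_{|t|<\ep}$ of a neighbourhood of $\Si$ with $\Si_0=\Si$. Theorem~\ref{highdimcomp} gives $\delta>0$ with $A(\Si_t)\le A(\Si_0)$ for $|t|<\delta$, while area-minimality, each $\Si_t$ being homologous to $\Si$, gives the reverse inequality; hence $A(\Si_t)\equiv A(\Si)$ on $(-\delta,\delta)$, so every leaf $\Si_t$ is itself area-minimising, in particular a two-sided stable minimal hypersurface diffeomorphic to $\Si$, so $\sigma(\Si_t)=0$. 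Applying the first step to each $\Si_t$ shows that every leaf is totally geodesic with $\mathrm{Ric}^M(\nu_t,\nu_t)\equiv0$. Differentiating the mean curvature along the foliation and using $H(\Si_t)\equiv0$ together with $|A_t|^2+\mathrm{Ric}^M(\nu_t,\nu_t)\equiv0$ then forces the lapse $\phi_t$ to satisfy $\Delta_{\Si_t}\phi_t=0$, hence to be constant on each closed leaf; reparametrising $t$ so that the leaves are equidistant (possible since $\phi_t>0$ depends only on $t$), the metric takes the form $dt^2+g_t$ on $\Si\times(-\delta',\delta')$ with $\partial_t g_t$ a multiple of the (vanishing) second fundamental form of $\Si_t$, so $g_t\equiv g^\Si$ and the neighbourhood is isometric to the product $g^\Si+dt^2$.

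The crux is the conformal Laplacian step: converting stability and $S^M\ge0$ into a sign condition on $\lambda_1(L_{g^\Si})$ is the higher-dimensional substitute for Gauss--Bonnet, and is exactly what makes the argument uniform across dimensions. The rest is bookkeeping around the equality case of Theorem~\ref{highdimcomp}; the only point that requires care is that the infinitesimal-splitting conclusion must be available on \emph{every} nearby leaf, which is why one needs the area function to be constant along the whole foliation rather than merely critical at $t=0$.
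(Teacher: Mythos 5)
Your proposal is correct and follows essentially the route the paper itself indicates: the infinitesimal splitting via the stability inequality combined with the conformal Laplacian of $\Si$ (the $S_0=0$ analogue of Proposition \ref{infinitesimalsplitt}, where the H\"older step is not needed), and the local splitting by feeding the resulting hypotheses (i)--(iv) into Theorem \ref{highdimcomp} and running the equality analysis exactly as in the proof of Theorem \ref{highdimsplitt}. No gaps; the observation that (iv) is automatic when the induced metric is scalar-flat and $\sigma(\Si)\leq 0$ matches the paper's own remark.
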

\noindent
Our proof of the area comparison theorem \ref{highdimcomp} provides a slightly different proof of this theorem.
\medskip\\
In the end of this section let us point out that, unlike the negative scalar curvature case, the first inequality in \eqref{areabd1} cannot be generalised in the same way. 
That is to say, the area of a closed, stable, minimal hypersurface with $\sigma(\Si)>0$ is not necessary bounded above in terms of its $\sigma$-constant and a positive lower bound on the scalar curvature of the ambient manifold. This is illustrated by the following example:
\medskip\\
Let $\Si:=\mathbb{S}^{n-2}\times \mathbb{S}^1(\ell)$, where $\mathbb{S}^{n-2}$ is the $(n-2)$-dimensional unit sphere and $\mathbb{S}^1(\ell)$ is the circle of radius $\ell$. Let $M:=\Si\times \mathbb{S}^1$ with the product metric. Then $S^M\equiv(n-2)(n-3):=S_0>0$ and $\Si$ is a stable minimal hypersurface in $M$. 
By Schoen's result \cite{Sch-total}, $\sigma(\Si)=\sigma(\mathbb{S}^{n-2}\times \mathbb{S}^1)=\sigma(\mathbb{S}^{n-1})$. Moreover, the $\sigma$-constant depends only on the underlying differentiable manifold and hence $\sigma(\Si)$ is independent of both $S_0$ and $\ell$. Therefore, by letting $\ell\rightarrow \infty$, the area of $\Si$ becomes arbitrarily larger than $\sigma(\Si)$.
\medskip\\
This example shows that case (a) of Theorem \ref{splitting} can not be generalised to higher dimensions in the same way as the other two cases.

%=========================================== END SECTION =========================================== 
\section{The Proofs}\noindent
We begin this section by briefly reviewing some concepts related with the Yamabe problem.
For a complete discussion of the Yamabe problem we refer to \cite{LP-Yamabe}, \cite{Sch-total}, and to \cite{BM-recent} for more recent developments.
\medskip\\
For a compact, $n$-dimensional Riemannian manifold $(M,g)$ consider the following functional, called the Einstein-Hilbert action,
\begin{equation}\label{EH}
  \mathcal{Y}(g) := \frac{\int_M S(g)d\mu}{\text{Vol}(M)^{(n-2)/n}},
\end{equation}
 where $S(g)$ is the scalar curvature of $(M, g)$. 
% \begin{rem}
% It was observed by Hilbert \cite{Hil-grundlagen} that a metric $g$ is a critical point of \eqref{EH} if and only if is Einstein which, by definition, has constant scalar curvature. \finrem
% \end{rem}
Writing $\overline{g} := f^{\frac{4}{n-2}}g$ for a positive function $f$ on $M$, the functional \eqref{EH} becomes
\begin{align}\label{Y1}
  \mathcal{Y}_g(f)=%\inf_{0<u\in \mathcal{C}^{\infty}(M)}
  \frac{\int_{M}\Big\{\frac{4(n-1)}{n-2}\|\nabla f\|^2 + S(g)f^2 \Big\}d\mu}
  {\Big(\int_{M}f^{2n/(n-2)}d\mu\Big)^{\frac{n-2}{n}}}.
\end{align}
\noindent
The resolution of the Yamabe problem by Trudinger, Aubin and Schoen \cite{LP-Yamabe} guarantees that the infimum in \eqref{Y1} over all $f>0$ exists and that the metric $\overline{g}$ has constant scalar curvature. 
Therefore the \emph{Yamabe invariant} is defined by
\begin{equation}\label{Y2}
  Q_g(M):=\inf_{f>0}\mathcal{Y}_g(f),
%   \frac{\int_M\Big\{\frac{4(n-1)}{n-2}|\nabla u|^2 + S(g)u^2 \Big\}d\mu}
%   {\Big(\int_Mu^{2n/(n-2)}d\mu\Big)^{\frac{n-2}{n}}}.
\end{equation}
which depends only on $M$ and on the conformal class of $g$.
It was observed by Aubin that the Yamabe invariant has the fundamental property that  for every compact Riemannian manifold $(M,g)$ we have $Q_g(M) \leq Q(\mathbb{S}^n)$, where $\mathbb{S}^n$ is the round $n$-sphere.
Therefore the supremum in \eqref{Y2} over all conformal classes exists and is bounded above by $Q(\mathbb{S}^n)$.
This led Schoen and Kobayashi to introduce a new differential-topological invariant.
%  defined by the following min-max procedure
\begin{defn}\label{sigdefn}
  For a compact Riemannian manifold $(M,g)$ the \emph{$\sigma$-constant} of $M$ is defined as
  \begin{equation} \nonumber
    \sigma(M):=\sup_{[g]\in\mathcal{C}}Q_g(M),
    \label{sigmadef}
  \end{equation}
where $\mathcal{C}$ is the space of conformal classes on $M$. 
\end{defn}
\noindent
We point out that the $\sigma$-constant is sometimes called the ''Yamabe invariant`` or the ''Yamabe constant``.
\medskip\\
There are two fundamental properties that the $\sigma$-constant shares with the Euler characteristic of a closed surface and, for this reason, the $\sigma$-constant can be viewed as a generalisation of the Euler characteristic to higher dimensions.$\ \ $ The first property, following immediately from Definition \ref{sigdefn} and from the above-mentioned result by Aubin, is that for any closed $n$-dimensional manifold $M$ we have $\sigma(M)\leq \sigma(\mathbb{S}^n)$, where $\sigma(\mathbb{S}^n) = n(n-1)\text{Vol}(\mathbb{S}^n)^{2/n}$. 
% By the Gauss-Bonnet theorem, a compact surface $\Si$ has genus $\gamma\geq 1$ if and only if it does not admit a metric of positive curvature. 
% This property is perhaps the most important property the $\sigma$-constant shares with the Euler characteristic of a closed surface. 
The second property, already mentioned in the introduction, is that a smooth, closed, $n$-dimensional manifold $M$ has $\sigma(M)\leq 0$ if and only if $M$ does not admit a metric of positive scalar curvature \cite[Lemma 1.2]{Sch-total}.
\medskip\\
These connections between the $\sigma$-constant and the Euler characteristic led Cai and Galloway \cite{CG-topology} to inequality \eqref{areabd2} for stable minimal hypersurfaces $\Si$ of negative $\sigma$-constant.
The following proposition was proven in \cite{M-thesis}. It shows that the equality in \eqref{areabd2} corresponds to an infinitesimal splitting of the ambient manifold. 
\begin{prop}\emph{(Incorporating \cite[Theorem 6]{CG-topology})}\label{infinitesimalsplitt}
  Let $M$ be a $n$-manifold with scalar curvature $S^M\geq S_0$, where $S_0<0$. Let $\Si$ be a closed, two-sided, stable, minimal hypersurface with $\sigma(\Si)<0$. 
  Then the area of $\Si$ satisfies inequality \eqref{areabd2} and if equality is attained then $\Si$ is totally geodesic and the normal Ricci curvature of $M$ vanishes along $\Si$, i.e. $M$ splits infinitesimally along $\Si$.  
  Moreover, the scalar curvature $S^M$ of $M$ equals $S_0$ at every point of $\Si$ and $\Si$ is an Einstein manifold with respect to the induced metric.
\end{prop}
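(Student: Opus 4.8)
The plan is to run the classical Schoen--Yau stability argument in the form used by Cai--Galloway to prove \eqref{areabd2}, and then to analyse the equality case; the inequality and the totally-geodesic / infinitesimally-split conclusions are (essentially) \cite[Theorem 6]{CG-topology}, so the part demanding real work is the Einstein assertion. Assume $n\geq 4$ (when $n=3$, $\Si$ is a surface, one tests with $\varphi\equiv 1$ and uses Gauss--Bonnet, and ``Einstein'' is vacuous). Fix a global unit normal $\nu$. Stability of the minimal hypersurface $\Si$ gives $\int_\Si\|\nabla\varphi\|^2 \geq \int_\Si(\|A_\Si\|^2 + \mathrm{Ric}^M(\nu,\nu))\varphi^2$ for all $\varphi\in C^\infty(\Si)$, and the traced Gauss equation of $\Si^{n-1}\subset M^n$ (with $H\equiv 0$) reads $2\,\mathrm{Ric}^M(\nu,\nu) = S^M - S^\Si - \|A_\Si\|^2$, so the potential equals $\tfrac12(\|A_\Si\|^2 + S^M - S^\Si)$; discarding $\|A_\Si\|^2\geq 0$ and using $S^M\geq S_0$ gives
\[
  \int_\Si\|\nabla\varphi\|^2 + \tfrac12\int_\Si S^\Si\varphi^2 \ \geq\ \tfrac12\int_\Si\big(\|A_\Si\|^2 + S^M\big)\varphi^2 \ \geq\ \tfrac{S_0}{2}\int_\Si\varphi^2 .
\]
Using $\tfrac{4(n-2)}{n-3} = 2 + \tfrac{2(n-1)}{n-3}$ together with this,
\[
  \int_\Si\!\Big(\tfrac{4(n-2)}{n-3}\|\nabla\varphi\|^2 + S^\Si\varphi^2\Big) = 2\Big(\int_\Si\|\nabla\varphi\|^2 + \tfrac12\int_\Si S^\Si\varphi^2\Big) + \tfrac{2(n-1)}{n-3}\int_\Si\|\nabla\varphi\|^2 \ \geq\ S_0\int_\Si\varphi^2 ;
\]
since $S_0<0$ and Hölder's inequality gives $\int_\Si\varphi^2 \leq A(\Si)^{2/(n-1)}\big(\int_\Si\varphi^{2(n-1)/(n-3)}\big)^{(n-3)/(n-1)}$, dividing through yields $\mathcal{Y}_{g^\Si}(\varphi)\geq S_0A(\Si)^{2/(n-1)}$ for every $\varphi>0$, hence $\sigma(\Si)\geq Q_{g^\Si}(\Si)\geq S_0A(\Si)^{2/(n-1)}$, which is \eqref{areabd2}.

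Suppose now equality holds, $\sigma(\Si)=S_0A(\Si)^{2/(n-1)}$; then every displayed $\geq$ above is an equality. In particular $Q_{g^\Si}(\Si)=\sigma(\Si)$, so $[g^\Si]$ realises the $\sigma$-constant, and by the resolution of the Yamabe problem there is a smooth positive minimiser $\varphi_0$ with $\mathcal{Y}_{g^\Si}(\varphi_0)=S_0A(\Si)^{2/(n-1)}$. Tracing equality for $\varphi=\varphi_0$ back through the last two displays forces simultaneously $\int_\Si\|\nabla\varphi_0\|^2=0$ --- so $\varphi_0$ is constant and $g^\Si$ is itself a Yamabe metric of constant scalar curvature --- and $\int_\Si(\|A_\Si\|^2 + S^M - S_0)\varphi_0^2 = 0$ with non-negative integrand, so $\Si$ is totally geodesic and $S^M\equiv S_0$ along $\Si$. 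Evaluating $\mathcal{Y}_{g^\Si}$ on the constant $\varphi_0$ gives $S^\Si\equiv S_0$, and feeding $\|A_\Si\|^2=0$, $S^M=S_0=S^\Si$ into the Gauss equation gives $\mathrm{Ric}^M(\nu,\nu)\equiv 0$ along $\Si$; this is the infinitesimal splitting, and $S^M\equiv S_0$ on $\Si$ is already recorded.

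It remains to show $g^\Si$ is Einstein, and this is where I expect the real difficulty. We now know $g^\Si$ is a Yamabe metric of constant negative scalar curvature $S_0$ whose conformal class realises $\sigma(\Si)<0$. By definition $Q_{g^\Si+th}(\Si)\leq\sigma(\Si)=Q_{g^\Si}(\Si)$ for every symmetric $2$-tensor $h$ on $\Si$, so $t\mapsto Q_{g^\Si+th}(\Si)$ is maximal at $t=0$. Since $\sigma(\Si)<0$, all these conformal classes have negative Yamabe invariant, hence a Yamabe representative unique up to scaling; and since the linearised Yamabe operator at $g^\Si$ is $-\tfrac{4(n-2)}{n-3}\Delta + (\text{positive constant})$, hence invertible, this representative depends smoothly on $t$, and $Q_{g^\Si+th}(\Si)$ equals the normalised Einstein--Hilbert action evaluated on it. Differentiating at $t=0$ --- and using that the $L^2$-gradient of the Einstein--Hilbert action at a constant-scalar-curvature metric is a multiple of the trace-free Ricci tensor, so the conformal direction produced by the $t$-dependence of the Yamabe factor contributes nothing --- one obtains $\int_\Si\langle\mathring{\mathrm{Ric}}_{g^\Si},h\rangle=0$ for all $h$, hence $\mathring{\mathrm{Ric}}_{g^\Si}=0$, i.e. $g^\Si$ is Einstein. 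The two points needing care in this last step are the smooth dependence of the Yamabe representative on the conformal class (which is where $\sigma(\Si)<0$, equivalently the strict negativity of $S^\Si$, is essential) and the precise first-variation formula for the Yamabe invariant; everything preceding it is the standard stability--Sobolev machinery.
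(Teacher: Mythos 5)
Your proof is correct and follows essentially the same route as the paper: stability plus the traced Gauss equation, the coefficient comparison $2<\tfrac{4(n-2)}{n-3}$, H\"older's inequality, and testing with the Yamabe minimiser, then tracing equality back through the chain to get totally geodesic, $S^M\equiv S_0$, the infinitesimal splitting, and the Einstein condition. The only (harmless) deviations are that you obtain $\mathrm{Ric}^M(\nu,\nu)\equiv 0$ directly from the Gauss equation after first showing $S^{\Si}\equiv S_0$ pointwise, whereas the paper invokes the Fischer-Colbrie--Schoen argument that the constants lie in the kernel of the Jacobi operator, and that you spell out the ``maximising conformal class with $\sigma<0$ implies Einstein'' first-variation argument which the paper simply cites to Schoen.
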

\begin{proof}
  As in \cite{CG-topology}, we want to relate the Yamabe invariant \eqref{Y1} with the second variation formula.
  Therefore, since $\Si$ is a closed, stable, minimal hypersurface, the second variation of area for minimal hypersurfaces gives
  \begin{equation}
    \label{stab1}
    0 \leq \int_{\Si}\Big\{\|\nabla f\|^2-(Ric^M(\nu,\nu) + \|B\|^2)f^2 \Big\}d\mu,
  \end{equation}
  where $\nabla$ and $d\mu$ are the gradient and the area element of $\Si$ with respect to the induced metric, $Ric^M(\nu,\nu)$ is the Ricci curvature of $M$ in the normal direction of $\Si$ and $\|B\|$ denotes the norm of the second fundamental form of $\Si$.
  The Gauss equation can be written as
  \begin{equation}\label{Gauss}
    2Ric^{M}(\nu,\nu) = S^M - S + H^2 - \|B\|^2,
  \end{equation}
   where $H$ and $S$ denote the mean curvature and the scalar curvature of $\Si$, respectively. Using this, inequality \eqref{stab1} gives
  \begin{align}
    \label{stab2}
    0&\leq \int_{\Si}\Big\{2\|\nabla f\|^2 + (S - S^M -\|B\|^2)f^2\Big\}d\mu\\ 
    \label{stab3}
    &\leq \int_{\Si}\Big\{2\|\nabla f\|^2 + (S - S^M)f^2\Big\}d\mu\\ 
    \label{stab4}
    &\leq \int_{\Si}\Big\{\frac{4(n-2)}{n-3}\|\nabla f\|^2 + Sf^2\Big\}d\mu - \int_{\Si}S^Mf^2d\mu,
  \end{align}
  where in the last inequality we have used that $2<\frac{4(n-2)}{n-3}$ for all $n\geq 4$. 
  By assumption $S^M \geq S_0$ and hence, by H\"older inequality, \eqref{stab4} gives
    \begin{align}\label{stab5}
    0 &\leq \int_{\Si}\Big\{\frac{4(n-2)}{n-3}\|\nabla f\|^2 + Sf^2\Big\}d\mu - S_0\int_{\Si}f^2d\mu\\
    &\leq\int_{\Si}\Big\{\frac{4(n-2)}{n-3}\|\nabla f\|^2 + Sf^2\}d\mu
    -S_0 \Big(\int_\Si d\mu\Big)^\frac{2}{n-1}\Big(\int_{\Si}f^\frac{2(n-1)}{n-3}d\mu\Big)^{\frac{n-3}{n-1}}\\ 
    \label{stab6}
    &=\int_{\Si}\Big\{\frac{4(n-2)}{n-3}\|\nabla f\|^2 + Sf^2\}d\mu
    -S_0 A(\Si)^\frac{2}{n-1}\Big(\int_{\Si}f^\frac{2(n-1)}{n-3}d\mu\Big)^{\frac{n-3}{n-1}}.
  \end{align}
  Dividing the last inequality by $\Big(\int_{\Si}f^\frac{2(n-1)}{n-3}d\mu\Big)^{\frac{n-3}{n-1}}>0$ we have
  \begin{equation}\label{Qineq}
    S_0A(\Si)^\frac{2}{n-1} \leq \frac{\int_{\Si}\Big\{\frac{4(n-2)}{n-3}\|\nabla f\|^2 + Sf^2\}d\mu}
      {\Big(\int_{\Si}f^\frac{2(n-1)}{n-3}d\mu\Big)^{\frac{n-3}{n-1}}}.
  \end{equation}
    Since \eqref{Qineq} holds for all $f\in\mathcal{C}^{\infty}(\Si)$, in particular, it holds for some positive function $f:=u$ for which the infimum in the term on the right is achieved. The existence of such a function $u>0$ follows from the resolution of the Yamabe problem in the negative scalar curvature case. Therefore from \eqref{Qineq} and by \eqref{Y2} we have
  \begin{equation}\label{Qineq2}
    S_0A(\Si)^\frac{2}{n-1} \leq Q_g(\Si) \leq \sup_{[g]}Q_g(\Si) = \sigma(\Si),
  \end{equation}
   where the last equality follows from the definition of the $\sigma$-constant \eqref{sigmadef}. Therefore dividing \eqref{Qineq2} by $S_0<0$ we obtain the area bound \eqref{areabd2}.
\medskip\\
  If equality in \eqref{areabd2} is attained then all inequalities in \eqref{Qineq2} become equalities. Therefore all inequalities \eqref{stab1}, \eqref{stab2} - \eqref{Qineq} become also equalities.
  From equality between \eqref{stab2} and \eqref{stab3} it follows that $\Si$ is totally geodesic. Next, since $2<\frac{4(n-2)}{n-3}$ is a strict inequality, it follows from equality between \eqref{stab3} and \eqref{stab4} that $\|\nabla u\|^2=0$ and hence that $u$ is constant.
%       \footnote{That $f$ is constant also follows since the H\"older inequality becomes an equality in \eqref{stab5} and \eqref{stab6}.}. 
  From equality between \eqref{stab4} and \eqref{stab5} we have that
  \[
    \int_{\Si} (S^M - S_0)d\mu = 0
  \]
  and since, by assumption $S^M -S_0 \geq 0$, it follows that $S^M = S_0$ along $\Si$.
  Furthermore, from equality in \eqref{stab1} we have that
  \[
    \int_{\Si}(Ric^M(\nu,\nu) + \|B\|^2)d\mu =0.
  \]
  By the same argument used in \cite{FCS-structure}, \cite{BBN-areamin} and \cite{Nun-areamin}, we have from the last equality that the constant functions lie in the kernel of the Jacobi operator $\Delta_{\Si} + Ric^M(\nu,\nu) +\|B\|^2$. Hence $Ric^M(\nu,\nu) +\|B\|^2=0$ and therefore, since $\Si$ is totally geodesic, $Ric^M(\nu,\nu) \equiv 0$.
  Finally since $\Si$ attains equality in \eqref{Qineq2} it follows that $Q_g(\Si) = \sigma(\Si)$ and hence by \cite[p. 126]{Sch-total} we have that $\Si$ is Einstein.
\end{proof}
\noindent
We next prove the area comparison theorem \ref{highdimcomp}.
\begin{proof}[Proof of Theorem \ref{highdimcomp}]
%   Let $\Si$ be closed, two-sided, area-minimising hypersurface that attains equality in \eqref{areabd2}. 
%   In particular $\Si$ is stable and therefore, by Proposition \ref{infinitesimalsplitt}, properties (i)-(iii) from the introduction hold along $\Si$.
  Since $\Si$ has the properties (i) and (ii), it follows by \cite[Proposition 2]{Nun-areamin} that there exists a function $w:\Si\times (-\ep,\ep)\rightarrow \R$ such that if $f_t(x) := \exp_x(w(x,t)\nu(x))$ with $x \in \Sigma$ and $t\in (-\ep,\ep)$, then the hypersurface $\Si_t:=f_t(\Si)$ has constant mean curvature $H(t)$ for all $t\in (-\ep,\ep)$.
  The \emph{lapse function} $\rho_t \colon \Sigma \rightarrow \R$ is defined by $\rho_t(x) := g\big( \nu_t(x),\frac{\rd}{\rd t}f_t(x) \big)$, where $\nu_t$ is a unit normal to $\Sigma_t$ chosen so as to be continuous in $t$.
  It satisfies the following evolution equation (cf. \cite[Theorem 3.2]{HP-eveq}):
  \begin{equation}\label{Ev-eq}
    H'(t) = -\Delta_t \rho_t - (Ric^M(\nu_t,\nu_t) + \|B_t\|^2) \rho_t,
  \end{equation}
  where $\Delta_t$ is the Laplacian on $\Si_t$,  $B_t$ is the second fundamental form of $\Sigma_t$ and $(\cdot)':=\frac{\rd}{\rd t}(\cdot)$.
  By the properties of the lapse function $\rho_t$ given by \cite[Proposition 2]{Nun-areamin}, we have that $\rho_0(x) = 1,\ \forall x\in\Si$. Therefore we can assume, by decreasing $\ep$ if necessary, that $\rho_t > 0$, for all $t\in (-\ep, \ep)$ and hence we can divide \eqref{Ev-eq} by $\rho_t$. 
  \medskip\\
  Therefore since $S^M \geq S_0$, we have by the Gauss equation \eqref{Gauss} that
  \begin{align}
    \nonumber
    2H'(t)\frac{1}{\rho_t} &= -2\frac{1}{\rho_t}\Delta_t \rho_t + S_t - S^M_t - H(t)^2 - \|B_t\|^2\\
    & \leq -2\frac{1}{\rho_t}\Delta_t \rho_t + S_t - S_0,
    \label{evineq}
  \end{align}
  where $S_t$ is the scalar curvature of the leaf $\Si_t$.
  \medskip\\
%   We aim to construct the Yamabe invariant inside of inequality \eqref{evineq}. 
  We next regard the leaves $\Si_t$ as a family of closed manifolds $(\Si, g_t)$, where $g_t$ is the induced metric. 
  By the resolution of the Yamabe problem, there exists for each $t\in(-\ep,\ep)$ a smooth, positive function $u_t>0$ on $\Si_t$ such that the metric
    $\overline{g}_t := u_t^{\frac{4}{n-3}}g_t$
  has constant scalar curvature $S_0$. 
  In analytic terms this means that, on each $\Si_t$, the function $u_t$ attains the infimum in \eqref{Y1}.  
  Furthermore since $S_0\leq0$, it follows by the maximum principle that the function $u_t$ is unique for each $t\in (-\ep,\ep)$. 
  \medskip\\  
  We aim to construct the Yamabe invariant \eqref{Y2} inside the inequality \eqref{evineq}. 
  Therefore we multiply inequality \eqref{evineq} by $u_t^2 >0$ and integrate along $\Si_t$ to obtain
  \begin{equation}
    2H'(t)\int_{\Sigma} \frac{u_t^2}{\rho_t} d\mu_t \leq -2\int_{\Sigma}\frac{u_t^2}{\rho_t}\Delta_t \rho_t\ d\mu_t 
    + \int_{\Sigma}S_tu_t^2 d\mu_t - S_0\int_{\Sigma}u_t^2 d\mu_t,
    \label{evineq1}
  \end{equation}
  where in the left term we have used that $\Si_t$ has constant mean curvature and hence $H'(t)$ is a function of $t$ only. Integrating by parts in the first term on the right we have
  \begin{align} 
      \nonumber
    -2\int_{\Sigma}\frac{u_t^2}{\rho_t}\Delta_t \rho_t\ d\mu_t &=
      \nonumber
     2\int_{\Sigma} g_t\Big( \nabla_t \frac{u_t^2}{\rho_t}, \nabla_t\rho_t\Big) d\mu_t\\
      \nonumber
   &=2\int_{\Sigma} g_t\Big( \frac{\rho_t\nabla_t u_t^2 - u_t^2\nabla_t\rho_t}{\rho_t^2}, \nabla_t\rho_t\Big) d\mu_t\\
      \nonumber
   &=2\int_{\Sigma} g_t\Big( \frac{2u_t}{\rho_t}\nabla_t u_t, \nabla_t\rho_t\Big) - g_t\Big( \frac{u_t^2}{\rho_t^2} \nabla_t\rho_t, \nabla_t\rho_t\Big) d\mu_t\\
   &=2\int_{\Sigma} 2\frac{u_t}{\rho_t} g_t\Big(\nabla_t u_t, \nabla_t\rho_t\Big) - \frac{u_t^2}{\rho_t^2} \|\nabla_t \rho_t\|^2 d\mu_t.\label{evineq2}
  \end{align}
  For the integrand of the first term on the right we apply the Cauchy-Schwarz inequality and then the arithmetic-geometric mean inequality to obtain
  \begin{align*}
    2\frac{u_t}{\rho_t} g_t\Big(\nabla_t u_t, \nabla_t\rho_t\Big) &\leq
    2 \Big|\frac{u_t}{\rho_t}\Big| \|\nabla_t u_t\|\cdot \|\nabla_t \rho_t\| \\  
    &\leq \|\nabla_t u_t\|^2 + \frac{u_t^2}{\rho_t^2}\| \nabla_t\rho_t\|^2.
  \end{align*}
  Therefore \eqref{evineq2} gives
  \begin{equation}\nonumber
    -2\int_{\Sigma}\frac{u_t^2}{\rho_t}\Delta_t \rho_t\ d\mu_t \leq \int_{\Si}2 \|\nabla_t u_t\|^2 d\mu_t.
  \end{equation}
  Substituting this inequality into \eqref{evineq1} we have
  \begin{align}\nonumber
      2H'(t)\int_{\Sigma} \frac{u_t^2}{\rho_t} d\mu_t &\leq 
    \int_{\Si}\Big( 2\|\nabla_t u_t\|^2 + S_tu_t^2\Big) d\mu_t - S_0\int_{\Sigma}u_t^2 d\mu_t\\ 
    &\leq
    \int_{\Si}\Big( \frac{4(n-2)}{n-3}\|\nabla_t u_t\|^2 + S_tu_t^2\Big) d\mu_t - S_0\int_{\Sigma}u_t^2d\mu_t,
    \label{evineq3}
   \end{align}
   where we have used that $2 < \frac{4(n-2)}{n-3}$, for all $n\geq 4$. For the last term on the right we have by H\"older inequality that
  \begin{equation}\label{Holder}
  - S_0\int_{\Sigma}u_t^2d\mu_t \leq -S_0 A(\Si_t)^{2/n-1} \Big(\int_{\Si} u_t^{\frac{2(n-1)}{n-3}}d\mu_t\Big)^{\frac{n-3}{n-1}}.     
  \end{equation}
  Therefore substituting \eqref{Holder} into \eqref{evineq3} and dividing by 
  $\Big(\int_{\Si} u_t^{\frac{2(n-1)}{n-3}}d\mu_t\Big)^{\frac{n-3}{n-1}}>0$ we have
  \begin{equation}\label{bigineq}
    2H'(t)\phi(t) \leq 
    \frac{\int_{\Si}\Big( \frac{4(n-2)}{n-3}\|\nabla_t u_t\|^2 + S_tu_t^2\Big) d\mu_t}{\Big(\int_{\Si} u_t^{\frac{2(n-1)}{n-3}}d\mu_t\Big)^{\frac{n-3}{n-1}}} -S_0 A(\Si_t)^{2/n-1},
  \end{equation}
  where $\phi(t):= \int_{\Sigma} u_t^2\rho_t^{-1}d\mu_t\Big(\int_{\Si} u_t^{\frac{2(n-1)}{n-3}}d\mu_t\Big)^{\frac{3-n}{n-1}}$.
  \medskip\\
  Since the function $u_t$ is the solution of the Yamabe problem on $(\Si,g_t)$, it attains the infimum in \eqref{Y1} and therefore, by the definition of the Yamabe invariant \eqref{Y2}, inequality \eqref{bigineq} becomes
 \begin{equation}\label{evineq4}
    2H'(t)\phi(t) \leq  Q_{g_t}(\Si) -S_0 A(\Si_t)^{2/n-1}.
  \end{equation}
  The $\sigma$-constant of $\Si_t$ depends only on the underling differential manifold $\Si$ and is therefore independent of $t\in(-\ep,\ep)$. Hence, by the definition of the $\sigma$-constant, we have that
  $Q_{g_t}(\Si) \leq \sigma(\Si), \ \forall t\in(-\ep,\ep)$.
 Therefore \eqref{evineq4} becomes
  \begin{equation}
  2H'(t)\phi(t) \leq \sigma(\Si) -S_0 A(\Si_t)^{2/n-1}.
    \label{keyineq}
  \end{equation}
  By assumption (iv) it follows that $\sigma(\Si) = S_0A(\Si_0)^{2/n-1}$ and therefore from \eqref{keyineq} we have
  \begin{align}\nonumber
      2H'(t)\phi(t) 
      &\leq -S_0 \Big(A(\Si_t)^{\frac{2}{n-1}}-A(\Si_0)^{\frac{2}{n-1}}\Big)\\ \nonumber
      &= -S_0 \int_0^t \frac{d}{ds}A(\Si_s)^{\frac{2}{n-1}}  ds\\ \nonumber
      &= -\frac{2S_0}{n-1} \int_0^t\Big(\frac{d}{ds}A(\Si_s)\Big) A(\Si_s)^{\frac{3-n}{n-1}}ds\\ 
      &= -\frac{2S_0}{n-1} \int_0^t H(s) \Big(\int_{\Si} \rho_s d\mu_s\Big)A(\Si_s)^{\frac{3-n}{n-1}} ds \label{keyineq2} ,      
  \end{align}		
  where in the last equality we have used the first variation of area formula 
  \begin{equation}\label{firstvar}
     \frac{d}{dt}A(\Si_t) = \int_{\Si} H(t)\rho_t d\mu_t = H(t)\int_{\Si} \rho_t d\mu_t
  \end{equation}
  and that $\Si_t$ has constant mean curvature for all $t\in(-\ep,\ep)$. 
  Denoting  by $\xi(t) : = \Big(\int_{\Si} \rho_t d\mu_t\Big)A(\Si_t)^{\frac{3-n}{n-1}}$ inequality \eqref{keyineq2} becomes
  \begin{equation}\label{Gronwall}
      H'(t)\phi(t) \leq -\frac{S_0}{n-1} \int_0^t H(s) \xi(s) ds.
  \end{equation}
  The family $u_t$ is continuous as a function of $t$ (see, for eg., \cite{Koi-scalar}) and, as we mentioned before, it satisfies $\overline{g}_t = u_t^{4/n-3}g_t$. By continuity we have that $\overline{g}_0 = u_0^{4/n-3}g_0$, where $\overline{g}_0$ has constant scalar curvature $S_0$. Obviously $\overline{g}_0$ and $g_0$ are in the same conformal class and by the Gauss equation \eqref{Gauss} and assumptions (i)-(iii), $g_0$ also has constant scalar curvature $S_0$. Hence, by uniqueness of the Yamabe problem for $S_0\leq 0$ we have $u_0\equiv 1$.
  Also, as mentioned before, $\rho_0 \equiv 1$. 
  We therefore conclude that the functions $\phi(t)$ and $\xi(t)$ are bounded away from zero. 
  In particular there are two positive constants $C_1$ and $C_2$ such that $\phi(t)>C_1$ and $\xi(t)<C_2$ for all $t\in (-\ep,\ep)$. Dividing \eqref{Gronwall} by $\phi(t)$ we have
  \begin{equation}\label{Gronwall2}
    H'(t) \leq -\frac{S_0}{(n-1)\phi(t)} \int_0^t H(s) \xi(s) ds.
  \end{equation}
  The analysis of this ''Gronwall-type`` inequality is similar with the one in \cite{MM} for $S_0\leq0$ and it implies that $H(t) \leq 0,\ \forall t\in(-\ep,\ep)$. The conclusion of Theorem \ref{highdimcomp} then follows from the first variation of area formula \eqref{firstvar}. 
  We include the argument for completeness. We have two cases to consider, depending on the sign of $S_0$.
  \medskip\\
  Case 1: $S_0=0$. 
  In this case, inequality \eqref{Gronwall2} becomes $H'(t) \leqslant 0, \ \forall \, t \in [0, \ep)$ and therefore, since $H(0) = 0$, $H(t) \leqslant 0, \ \forall \, t \in [0, \ep)$.
  \medskip\\
  Case 2: $S_0<0$.
  Assume, for a contradiction, that there exists $t_0 \in (0,\ep)$ such that $H(t_0) > 0$ and let
    \[
    I := \{t \in [0,t_0] : H(t) \geqslant H(t_0) \}.
    \]
  \noindent \textbf{Claim: } $\inf I=0$.
  \begin{proof}[Proof of the Claim]
    Let $t^* := \inf I$ and assume, again for a contradiction, that $t^*>0$. 
    By the mean value theorem, $\exists \ t_1 \in (0,t^*)$ such that
    \begin{equation} \label{MVT}
      H(t^*) = H'(t_1)t^*,
    \end{equation}
    since $H(0)=0$. Then by \eqref{Gronwall2} and \eqref{MVT} we have
    \begin{align} 
    H(t^*)&\leq \frac{-S_0t^*}{(n-1)\phi(t_1)}
    \int_0^{t_1} H(s) \xi(s) \, ds\\ \nonumber
    &\leq \frac{-S_0t^*}{(n-1)\phi(t_1)} 
    \int_0^{t_1} H(t^*) \xi(s) \, ds
      \leq 
    \frac{-S_0t^*C_2}{(n-1)C_1}H(t^*)t_1\\ \nonumber
    &\leq C_3H(t^*)\ep^2,
    \end{align}
    where $C_3:=\frac{-S_0C_2}{(n-1)C_1}>0$. This is a contradiction for $\ep< C_3^{-\frac12}$ and the Claim is proved.
  \end{proof}% of Claim
  \noindent
  Since $\inf I = 0$, it follows from the definition of $I$ that $H(0) \geqslant H(t_0)$ and since, by assumption, $H(t_0)>0$, we conclude that $H(0) > 0$. This contradicts the hypothesis that $\Sigma$ is totally geodesic and the proof of Theorem \ref{highdimcomp} is complete.
\end{proof}% of the higher dimensional area comparison
\noindent
We next prove the splitting theorem \ref{highdimsplitt}.
\begin{proof}[Proof of Theorem \ref{highdimsplitt}]
  The conclusion of case $S_0<0$ of Theorem \ref{highdimcomp} and the assumption that $\Sigma$ is area-minimising imply that,
  for the constant mean curvature family of surfaces $\Sigma_t$ we have $A(\Si_t) = A(\Si_0), \ \forall \, t \in (-\delta,\delta)$.
  In particular, each $\Sigma_t$ is area-minimising and the area of each $\Sigma_t$ attains equality in \eqref{areabd2}.
  It follows from Proposition \ref{infinitesimalsplitt} that $\Sigma_t$ is totally geodesic and that $Ric^M(\nu_t,\nu_t)$ vanishes along  $\Si_t$; it also follows that $\Si_t$ is Einstein.
  Equation \eqref{Ev-eq} then tells us that the lapse function $\rho_t$ is harmonic, and therefore is constant on $\Sigma_t$,
  i.e. $\rho_t$ is a function of $t$ only. Finally, the same argument from \cite{MM} (see also \cite{Nun-areamin} and \cite{BBN-areamin}) shows that the vector field $\nu_t$ is parallel.
\end{proof}
\noindent
Finally, the two corollaries from the Introduction hold since, by assumption, the induced metric on $\Si$ is hyperbolic and by \cite[Section 2]{And-canonical} it attains the $\sigma$-constant of $\Si$.

% =============================================================
\section{Strictly Area-Minimising Hypersurfaces in\\ Manifolds with Scalar Curvature Bounded\\ Below}\noindent
% It is natural to ask if Theorem \ref{MM} is true in dimensions higher than three, as is the case of the Heintze-Karcher-Maeda theorem. In Chapter 5 we will see that, under one additional topological assumption on $\Si$, the area comparison theorem \ref{MM} can be extended to higher dimensional manifolds of non-negative scalar curvature. However, it turns out that, without this additional topological assumption, Theorem \ref{MM} is true in dimension three only. We have the following Proposition.
In this section we address the optimality of our assumptions in the area comparison theorems \ref{MM} and \ref{highdimcomp}.
The following proposition shows that increasing in Theorem \ref{MM} the dimension of both $M$ and $\Si$, while assuming only (i)-(iii), can not lead to the same conclusion.
\begin{prop}\label{cexp}
  There exist $n$-dimensional manifolds $(M,ds^2)$, with $n\geq 4$ and scalar curvature $S^M$, that contain a closed, two-sided hypersurface $\Si$ such that the following hold:
  \begin{enumerate}
    \item[(a)] $S^M \geq S_0$, for some $S_0\in \R$.
  %   , where, by rescaling, $S_0$ can be assumed to be equal\\ to $0,\ n(n-1)$ or $-n(n-1)$,
    \item[(b)] $\Si$ is strictly area-minimising with respect to the induced metric and
    \item[(c)] properties (i)-(iii) of Theorem \ref{MM} (and Theorem \ref{highdimcomp})  hold.
  \end{enumerate}
\end{prop}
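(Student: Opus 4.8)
The plan is to construct an explicit warped product that exhibits the desired phenomenon, generalising the counterexample $\mathbb{S}^2\times(-\ep,\ep)$ with metric $(1+t^4)g + dt^2$ discussed after Theorem \ref{MM}. First I would fix a closed $(n-1)$-dimensional manifold $N$ carrying a suitable metric $g^N$ — a natural choice is a compact hyperbolic manifold, or $\mathbb{S}^{n-1}$ with the round metric, so that $(N, g^N)$ has constant scalar curvature $S_0^N$. On $M := N\times(-\ep,\ep)$ I would put the metric $ds^2 := \varphi(t)^2\, g^N + dt^2$, where $\varphi\colon(-\ep,\ep)\to(0,\infty)$ is a smooth, even function with $\varphi(0)=1$, $\varphi'(0)=0$, and $\varphi''(0)=0$ (and in fact vanishing to high order at $t=0$, e.g. $\varphi(t)=1+t^{2k}$ for $k$ large). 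Taking $\Si := N\times\{0\}$, the conditions $\varphi'(0)=0$ and $\varphi''(0)=0$ force the second fundamental form of $\Si$ to vanish, giving (i), and also force $Ric^M(\rd_t,\rd_t)=0$ along $\Si$, giving (ii).

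Next I would verify (iii) and (a). For a warped product $\varphi^2 g^N + dt^2$ the scalar curvature has the standard form
\[
  S^M = \varphi^{-2} S^N - 2(n-1)\frac{\varphi''}{\varphi} - (n-1)(n-2)\frac{(\varphi')^2}{\varphi^2}.
\]
At $t=0$ this equals $S^N =: S_0$ because all the $\varphi$-terms vanish there, giving (iii). For (a) I need $S^M \geq S_0$ on all of $M$; since the first term $\varphi^{-2}S^N$ moves in the right direction when $S_0<0$ and $\varphi>1$, and the remaining terms are $O(t^{2k-2})$, choosing $k$ large and $\ep$ small makes the $\varphi^{-2}S^N - S_0$ contribution dominate, so $S^M \geq S_0$ holds (with equality only along $\Si$); this is the same mechanism as in the $n=3$ example. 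The point is that the metric on $\Si$ need \emph{not} attain the $\sigma$-constant here — indeed by shrinking $g^N$ one can arrange $S_0 A(\Si)^{2/(n-1)}\neq\sigma(\Si)$ — so assumption (iv) of Theorem \ref{highdimcomp} genuinely fails, which is the whole point of the proposition.

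It remains to establish (b), that $\Si$ is strictly area-minimising in $M$. Since $A(N\times\{t\}) = \varphi(t)^{n-1}A(\Si)$ and $\varphi(t)>1=\varphi(0)$ for $t\neq 0$, every other leaf has strictly larger area; more generally, for any hypersurface $\Si'$ homologous to $\Si$ one can project to $N$ and use that the warping factor is $\geq 1$ with a pointwise area estimate $d\mu_{\Si'} \geq d\mu_{\text{proj}}$, together with the fact that $\Si$ minimises in its homology class inside $N$ (which holds by the choice of $g^N$, e.g. by calibrating with $dt$ or simply because $N\times\{0\}$ is the unique minimiser among graphs). I expect this last verification — showing $\Si$ is area-minimising among \emph{all} competitors, not just among leaves or graphs — to be the main technical point; the cleanest route is to observe that $dt$ is a closed one-form of norm $1$ whose restriction to $\Si$ vanishes, so $\Si$ is calibrated, hence area-minimising in its homology class, and the strictness follows from $\varphi>1$ away from $t=0$. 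The curvature computations (i)--(iii) and (a) are then routine verifications with the warped-product formulas.
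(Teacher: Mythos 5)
There is a genuine gap, and it is exactly the obstruction the paper spends its heuristic discussion on before the proof. With a single warping function, $ds^2=\varphi(t)^2g^N+dt^2$, the scalar curvature is
\begin{equation*}
  S^M=\varphi^{-2}S^N-2(n-1)\frac{\varphi''}{\varphi}-(n-1)(n-2)\frac{(\varphi')^2}{\varphi^2},
\end{equation*}
and condition (iii) forces $S_0=S^N$. For $\varphi(t)=1+t^{2k}$ the ``good'' term $(\varphi^{-2}-1)S^N$ is only $\mathcal{O}(t^{2k})$, whereas the term $-2(n-1)\varphi''/\varphi=-4k(2k-1)(n-1)t^{2k-2}+\mathcal{O}(t^{4k-2})$ is negative and of strictly \emph{lower} order. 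Hence $S^M<S_0$ on a punctured neighbourhood of $\Si$ no matter how large you take $k$ or how small you take $\ep$: your claim that the $\varphi^{-2}S^N-S_0$ contribution ``dominates'' is false, since it is the higher-order term. (Taking $N=\mathbb{S}^{n-1}$ makes matters worse, as then $(\varphi^{-2}-1)S^N<0$ too.) The tension is structural: to make $\Si$ strictly minimising among the leaves you need $\varphi>1$ off $t=0$, i.e.\ $\varphi''>0$ at leading order near $0$, and that same $\varphi''$ is what drags $S^M$ below $S_0$. This is why the paper's construction uses a \emph{doubly} warped product over $\Si=F_1\times F_2$ (e.g.\ $\mathbb{S}^2\times\mathbb{T}^{n-3}$, or a hyperbolic factor times a circle) with two warping functions $u,w$ whose coefficients are tuned so that the second-derivative contributions $2\ddot u/u+\dim(F_2)\,\ddot w/w$ cancel to order $t^6$, while the product $u^2w^{\dim F_2}$ still exceeds $1$ (at order $t^8$) off $t=0$; this requires $\Si$ to be a nontrivial product and genuinely uses $n\geq 4$.

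Your treatment of (b) is essentially sound in spirit — comparing a competitor with its projection, or equivalently calibrating with the pullback of the volume form of $N$ (note $*dt$ itself is not closed; the closed form is $\pi^*d\mu_{g^N}$, whose comass is $\varphi^{-(n-1)}\leq 1$) — and this matches the divergence-theorem argument in the paper. You also correctly identify that the point of the example is the failure of assumption (iv). But as it stands the construction does not satisfy (a) together with (iii), so the proposal does not prove the proposition.
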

\noindent
We will prove Proposition \ref{cexp} by explicitly constructing the metric $ds^2$ on $M$.
Before doing so, let us first describe the intuition behind our construction.
For this purpose, let $\mathscr{S}$ be a closed, oriented surface of any genus $\gamma\geq 0$ equipped with the metric $ds_1^2$ of constant Gaussian curvature and let $\mathbb{S}^1$ be unit circle with the metric $ds_2^2$. Furthermore, let $\Si:=\mathscr{S}\times \mathbb{S}^1$.
We aim to construct on $M=\Si\times(-\ep,\ep)$, for some $\ep>0$, a doubly warped metric 
\begin{equation}\label{wmetric}
    ds^2 = u^2(t)ds_1^2 + w^2(t)ds_2^2+dt^2,
\end{equation}
where $u$ and $w$ are both smooth, positive functions.
% and $ds_1$ and $ds_2$ represent the metric on $\mathscr{S}$ and $\mathbb{S}^1$ respectively.
\medskip\\
In order to prove Proposition \ref{cexp} we need to find two functions $u$ and $w$ such that both the area of the leaves $\Si_t:=\Si\times\{t\}$ and the scalar curvature $S^M_t$ of $M$, at points on the leaves, increase as $|t|$ increases away from zero.
As we will see in the following, finding these functions is a rather delicate task since one needs to compensate for the negative sectional curvature that one brings in by increasing the area of the leaves $\Si_t$.
Indeed, by choosing an increasing function $u$ in \eqref{wmetric}, the area of $\mathscr{S}$ will increase. If, additionally, $ds_1^2$ is of negative curvature, then the scalar curvature of $\mathscr{S}_t:=\mathscr{S}\times\{t\}$ will increase as well. However, the scalar curvature $S^M$ of the manifold $M$ will not necessary increase since if, for example, we let $u(t):= 1+t^4$ then, for some vector field $X$ tangent to $\mathscr{S}$, the sectional curvature for the section $X\wedge\rd_t$ is given by $K(X\wedge \rd_t) = -12t^2+\mathcal{O}(t^6)$, and this will decrease the scalar curvature $S^M_t$ of $M$ at point on the leaves $\Si_t$. 
We can compensate for these negative sectional curvatures by choosing an appropriate function $w$ which will decrease the length of $\mathbb{S}^1\times\{t\}$ and hence will bring in enough positive sectional curvature for sections containing $\rd_t$ and a tangent vector to $\mathbb{S}^1\times\{t\}$. This second step, however, has the drawback that it decreases the length of $\mathbb{S}^1\times\{t\}$ and hence the area of the entire leaf $\Si_t$.
\medskip\\
The picture we just described suggests that one needs to find suitable warping functions $u$ and $w$ such that each of them will compensate for the ''drawbacks`` of the other. 
That is, in order to increase both the area of the leaves $\Si_t$ and the scalar curvature of $M$ at points on the leaves, the warping functions $u$ and $w$ will have to depend on each other. 
This dependence is probably best suggested by the following 3-dimensional:
    On the 3-torus $N=\mathbb{S}^1\times\mathbb{S}^1\times\mathbb{S}^1$ we put the doubly warped product metric $f^2(t)ds_1^2+f^{-2}(t)ds_2^2+dt^2$ where $f$ is a smooth function on $\mathbb{S}^1$ with $f(0)=1$. With this metric, $N$ is a non-flat 3-torus foliated by flat, minimal 2-tori $T_t:=\mathbb{S}^1\times\mathbb{S}^1\times \{t\}$, at least two of which are totally geodesic  and all of which have equal area. Therefore $T_0$ is not \emph{strictly} area-minimising.
%   Indeed, the Weingarten map of the leaf $T_t$ is given by $\nabla_X{\rd_t} = (2f'/f)X$ and $\nabla_Y{\rd_t} = -(2f')(f^3)Y$, where $X$ and $Y$ are tangent to the ''first`` and ''second`` $\mathbb{S}^1$ in $N$, respectively (c.f. \cite{O-sriem} Ch.7, Proposition 35 (2)).
%   Since the function $f$ is smooth and defined on the compact domain $\mathbb{S}^1$, there exist at least two $t_1$ and $t_2$ such that $f'(t_i)=0,\text{ for } i=1,2$. Therefore at $t_1$ and $t_2$ the Weingarten map vanishes and hence $T_{t_1}$ and $T_{t_2}$ are totally geodesic tori.
Furthermore, the normal Ricci curvature $Ric^N(\rd_t,\rd_t)$ of $N$ involves second order derivatives of $f$ \cite[Ch. 3.2]{Pet-riem}. Therefore if, for example, $f(t):=1+t^{2k},\text{for } k\geq 2$ we expect $Ric^N(\rd_t,\rd_t)$ to be of order $t^{2k-2}$. However, by the way the two warping functions $f$ and $f^{-1}$ depend on each other, the normal Ricci curvature of $N$ will actually be of order $t^{4k-2}$.
This observation suggests that the normal Ricci curvature of a new ''perturbed`` metric $(f(t)+t^{2m})^2 ds_1^2+f^{-2}(t)ds_2^2+dt^2$ will still be of order $t^{4k-2}$ for $m>k$ large enough, while the area of the leaves $T_t$ will increase like $t^{2m}$. 
In particular $T\times\{0\}$ will be \emph{strictly} area-minimising. 
With this example in mind we return to our construction.
\begin{proof}[Proof of Proposition \ref{cexp}]
  There are three cases to consider depending on the sign of the lower bound on the scalar curvature of $M$.\smallskip\\
  Case 1: $S_0>0$. 
  Without the loss of generality we can assume $S_0=2$. 
  For this case let $\Si=\mathbb{S}^2\times\mathbb{T}^{n-3}$, where $\mathbb{S}^2$ is the 2-sphere, $\mathbb{T}^{n-3}$ is the $(n-3)$-dimensional torus and when $n=4$, $\mathbb{T}^1$ is just the unit circle $\mathbb{S}^1$.
  On $M:=\Si\times (-\ep, \ep)$, for some $\ep>0$, we put the doubly warped product metric $ds^2 = u^2(t)ds_1^2 + w^2(t)ds_2^2+dt^2$, where $ds_1^2$ is of constant curvature equal to 1 and $ds_2^2$ is flat.
  We define the two warping functions by
  \begin{align*}
    \begin{cases}
      u(t)&:=(1+ \text{dim}(\mathbb{T}^{n-3})t^4)^{-1}\\
	  &=(1+ (n-3)t^4)^{-1}\\
      w(t)&:= 1+ \text{dim}(\mathbb{S}^2)(t^4+t^8)\\
	  &= 1+ 2t^4+2t^8.
    \end{cases}
  \end{align*}
  As we will see below, the coefficients in the above expressions of $u$ and $w$ are such as to guarantee that the normal Ricci curvature of $M$ vanishes to a high enough order; just as in the case of the 3-torus we just described.\\
  Elementary calculus gives 
  \begin{equation}
    \begin{cases}\label{derivatives}
      \dot{u}(t)= -4(n-3)t^3 \Big(1+(n-3)t^4\Big)^{-2}\\
      \dot{w}(t)= 8t^3+16t^7\\
      \ddot{u}(t)= \Big(-12(n-3)t^2+20(n-3)^2t^6\Big)\Big( 1+(n-3)t^4\Big)^{-3}\\
      \ddot{w}(t)= 24t^2+112t^6.
    \end{cases}
  \end{equation}
  We will first show that $\Si_0:=\Si\times\{0\}$ is totally geodesic. Indeed, if $X$ is a vector field tangent to $\mathbb{S}^2$, then by \cite[Ch.7]{O-sriem} and \eqref{derivatives} we have that
  \begin{equation*}
      \nabla_{\rd_t}X\big|_{t=0} = \frac{u'(0)}{u(0)}X =0.
  \end{equation*}
  And for $U$ tangent to $\mathbb{T}^{n-3}$ we have that
  \begin{equation*}
      \nabla_{\rd_t}U\big|_{t=0} = \frac{w'(0)}{w(0)}U =0.
  \end{equation*}
  Therefore, at $t=0$, the Weingarten map vanishes identically implying that $\Si_0$ is totally geodesic and hence property (i) of Theorem \ref{MM} and \ref{highdimcomp} is satisfied.
  \smallskip\\
  The normal Ricci curvature of $M$ is given by
  \begin{align*}
    Ric(\rd_t,\rd_t) &= -\Big(2\frac{\ddot{u}}{u}+(n-3)\frac{\ddot{w}}{w}\Big)\\ 
    &=\frac{-c(n)t^6+\mathcal{O}(t^8)}
	    {(1+(n-3)t^4)^2(1+2t^4+2t^8)},
  \end{align*}
  where the second equality follows from \eqref{derivatives} and $c(n)$ is a positive integer depending on $n$. Obviously $Ric(\rd_t,\rd_t)$ vanishes at $t=0$ and  hence property (ii) of Theorem \ref{MM} and \ref{highdimcomp} is satisfied.
  The scalar curvature of $M$ is given by
  \begin{align*}
    S^M &= \frac{1}{u^2}S_1 - \Big(4\frac{\ddot u}{u} + 2(n-3)\frac{\ddot w}{w} \Big) 
    -2\frac{{\dot u}^2}{u^2} -(n-3)(n-4)\frac{\dot w^2}{w^2}
    -4(n-3)\frac{\dot u\dot w}{uw}\\
    &=\frac{1}{u^2}S_1 + \mathcal{O}(t^6),
  \end{align*}
  where $S_1=2$ is the scalar curvature of the round metric $ds^2_1$ of $\mathbb{S}^2$. Since by \eqref{derivatives} we have that, at $t=0$, $S_0=S_1$, the last inequality gives
  \begin{align*}
      S^M - S_0 &= 2\Big(\frac{1}{u^2}-1\Big) +\mathcal{O}(t^6)\\
      &=4(n-3)t^4+\mathcal{O}(t^6).
%       &\geq 0.
  \end{align*}
  Therefore $S^M\geq S_0$ for sufficiently small $\ep>0$, which proves that (a) holds. Moreover, since $S^M\equiv S_0$ at $t=0$, condition (iii) of Theorem \ref{MM} and \ref{highdimcomp} also holds.
  \medskip\\
  The area element $\mu_t$ of $\Si_t$ satisfies
  \begin{align}\label{areaelem}
    d\mu_t &= u^2(t)w^{n-3}(t)d\mu_0\\ \nonumber
    &= \left(1+\frac{ (n-3)^2t^8+ \mathcal{O}(t^{12})} {1+2(n-3)t^4 + (n-3)^2t^8}\right)d\mu_0\\ \nonumber
    &\geq\mu_0,
  \end{align}
  \noindent
  where $d\mu_0$ is the area element of $\Si_0$. Therefore, after integrating the last inequality over $\Si$ we have that $A(\Si_0)< A(\Si_t)$ for $0<t<\ep$, which shows that $\Si_0$ has least area among all leaves $\Si_t$. 
  \medskip\\
  Finally, to show that $\Si_0$ is \emph{strictly} area-minimising in $M$, and hence to prove property (b), we have to show that there are no hypersurfaces with area less than or equal to $\Si_0$ and that are not leaves.
  \medskip\\
  \textbf{Claim: } For any smooth positive, non-constant function $u$ on $\Si$ with $0<u(x)<\ep$ for all $x\in\Si$, the hypersurface
    \[
    \Si_u:=\{\text{exp}_x (u(x)\nu(x)): x\in \Si \}
    \]
  has area strictly greater than the area of $\Si$, where $\nu$ is the unit normal vector field along $\Si$.
  \begin{proof}[Proof of the claim:]
  Let $A(\Si_u)$ be the area of $\Si_u$ and  let $\nu_u$ be the unit normal vector field along $\Si_u$.
  For all points $p\in \Si_u$, there exists $t\in [0,\ep)$ and $x\in\Si$, such that $p=(x,t)\in \Si_t$. If $\nu_t$ is the unit normal vector field of $\Si_t$ then 
  \begin{equation}\label{eqclaim1}
    g( \nu_u(x), \nu_t(x) ) \leq 1,\ \ \forall p\in\Si_u,
  \end{equation}
  with equality on an open set if and only if $\Si_u$ and $\Si_t$ coincide at this open set for some fixed value of $t$; that is, if and only if $\Si_u$ is a leaf.
  \medskip\\
  Let $\Omega$ be the region in $M$ bounded by $\Si$ and $\Si_u$. Then, since $\rd \Omega = \Si \cup \Si_u$, we have that
  \begin{align}\nonumber
    \int_{\Omega} \text{div}_{\Omega} (\nu_t) dV &= \int_{\Si_u} g(\nu_t, \nu_u) d\mu_u - \int_{\Si} g(\nu, \nu) d\mu \\ \nonumber	
    & = \int_{\Si_u} g(\nu_t, \nu_u) d\mu_u - A(\Si)\\ \label{eqclaim2}
    & \leq A(\Si_u) - A(\Si), 
  \end{align}
  where the last inequality follows from \eqref{eqclaim1}. On the other hand $\text{div}_{\Omega}(\nu)(p)= H(p)$, the mean curvature of $\Si_t$ at the point $\text{exp}_x(t\nu(x))$.
  By a direct calculation using \eqref{derivatives}, \eqref{areaelem} and that $\frac{\rd}{\rd t} d\mu_t = H(x,t)d\mu_t$ (cf. \cite[Thorem 3.2(ii)]{HP-eveq}, we have
  \begin{align*}
    H(x,t) &= \frac{2 \dot u}{u} + (n-3)\frac{\dot w}{w}\\
	  &=\frac{16(n-3)^2t^7 + \mathcal{O}(t^{11})}{1+\mathcal{O}(t^4)}\\
	  &> 0,
  \end{align*}
  for all $0< t<\ep$ and $x\in \Si$. Hence by \eqref{eqclaim2} we conclude that $A(\Si)<A(\Si_u)$.
  This completes the proof of the claim and hence of Case 1.
  \end{proof}
  \noindent
  The remaining two cases are, to some extent, similar to the first one and, for this reason, we will omit the details.
  \medskip\\
  Case 2: $S_0<0$. In this case we will define $\Si:=N^{n-2}\times\mathbb{S}^1$, where $N$ is a $(n-2)$-dimensional, closed, hyperbolic manifold and $\mathbb{S}^1$ is the unit circle. Then on $M:=\Si\times (-\ep,\ep)$ we will put again a doubly warped product metric $ds^2 = u^2(t)ds_1^2 + w^2(t)ds_2^2+dt^2$, where the functions $u$ and $w$ are given by $u(t):=1+t^4+t^8$ and $w(t):=(1+(n-2)t^4)^{-1}$. 
  \medskip
  \noindent\\
  Regarding the assumption (iv) in Theorem \ref{highdimcomp}, notice that, if $N$ is a closed hyperbolic surface then $\Si=N\times \mathbb{S}^1$ is a closed 3-manifold with $\sigma(\Si)<0$. However the product metric does not realises the $\sigma$-constant since is not Einstein. Therefore the previous example for $n=4$ shows that that assumption (iv) from Theorem \ref{highdimcomp} can not be removed when $S_0<0$.
  \medskip\\
  Case 3: $S_0=0$. In this case we let $\Si:=N^{n-3}\times\mathbb{S}^2(r)$, where $N^{n-3}$ is a closed, hyperbolic, $(n-3)$-dimensional manifold with scalar curvature $S_1 = -(n-3)(n-4)$
  and $\mathbb{S}^2(r)$ is the two-sphere of radius $r:=\sqrt{2/(n-3)(n-4)}$ equipped with the round metric $ds^2_2$. Hence $ds_2^2$ has scalar curvature $S_2 = 2/r^2 = (n-3)(n-4)$.
  Finally, the warping functions $u$ and $w$ are given by $u(t): = 1 + 2t^4+2t^8$ and $w(t):= (1+ (n-3)t^4)^{-1}$.
% Notice that by letting the radius of the two-sphere be $r<\sqrt{2/(n-3)(n-4)}$ or $r>\sqrt{2/(n-3)(n-4)}$, the scalar curvature of $M$ will be strictly positive or strictly negative, respectively. Thus providing several more examples in the cases 1 and 2 for all $n\geq 5$.  
This completes the proof of Proposition \ref{cexp}.
\end{proof}

\subsection*{Acknowledgements}
I most grateful to my Ph.D. advisor, Mario J. Micallef, for all the discussions we have had on this topic.
\bibliographystyle{amsplain}
\bibliography{/home/v/Documents/math/texbiblio/biblio}
\bigskip
% Address: 
Email: \texttt{V.Moraru@warwick.ac.uk}
\end{document}